\documentclass[11pt,draft]{article}

\usepackage[top=3cm, bottom=3cm, left=2.5cm, right=2.5cm]{geometry}

\usepackage[T1]{fontenc}
\usepackage{latexsym,amssymb,enumerate}
\usepackage{amsmath}
\usepackage{amsthm}
\usepackage[mathscr]{eucal}
\usepackage{color}
\usepackage{xcolor}
\usepackage[abbrev]{amsrefs}
\usepackage{array}
\usepackage{arydshln}
\usepackage{here}
\usepackage{graphicx}
\usepackage{multirow}

\newtheorem{Theorem}{\bf Theorem}[section]
\newtheorem{Lemma}{\bf Lemma}[section]
\newtheorem{Proposition}{\bf Proposition}[section]
\newtheorem{Corollary}{\bf Corollary}[section]
\newtheorem{Remark}{\bf Remark}[section]
\newtheorem{Example}{\bf Example}[section]
\newtheorem{Definition}{\bf Definition}[section]

\newenvironment{theorem}{\begin{Theorem}}{\end{Theorem}}
\newenvironment{lemma}{\begin{Lemma}}{\end{Lemma}}

\newenvironment{corollary}{\begin{Corollary}}{\end{Corollary}}
\newenvironment{remark}{\begin{Remark}}{\end{Remark}}
\newenvironment{example}{\begin{Example}}{\end{Example}}

\makeatletter
\def\senbun#1(#2)#3({\@senbun(#2)(}
\def\@senbun(#1,#2)(#3,#4){%
   \@tempdima#1\p@ \advance\@tempdima#3\p@
   \divide\@tempdima\tw@
   \@tempdimb#2\p@ \advance\@tempdimb#4\p@
   \divide\@tempdimb\tw@
   \edef\@senbun@temp{\noexpand\qbezier(#1,#2)%
      (\strip@pt\@tempdima,\strip@pt\@tempdimb)(#3,#4)}%
   \@senbun@temp}
\makeatother

\allowdisplaybreaks[4]

\numberwithin{equation}{section}

\def\XXint#1#2#3{{\setbox0=\hbox{$#1{#2#3}{\int}$}
\vcenter{\hbox{$#2#3$}}\kern-.5\wd0}}


\title{Multiplicity of singular solutions for semilinear elliptic equations with superlinear source terms}
\author{Yohei Fujishima and Norisuke Ioku}
\author{
        Yohei Fujishima\footnote{e-mail address: fujishima@shizuoka.ac.jp} \\ \\
        {\small Department of Mathematical and Systems Engineering} \\
        {\small Faculty of Engineering, Shizuoka University} \\
        {\small 3-5-1 Johoku, Hamamatsu 432-8561, Japan} \\ \\
        Norisuke Ioku\footnote{e-mail address: ioku@tohoku.ac.jp} \\ \\
        {\small Mathematical Institute, Tohoku University} \\
        {\small Aramaki 6-3, Sendai 980-8578, Japan}
        }
\date{}
\pagestyle{myheadings}
\markboth{Y. Fujishima and N. Ioku}{Multiplicity for nonlinear elliptic equations}
%

\begin{document}
\maketitle

\abstract{
This paper investigates the multiplicity of singular solutions for the nonlinear elliptic equation
$-\Delta u =f(u)$ near the origin.
Applying the classification of nonlinear functions and the transformation, which were developed in \cite{FI}, we generalize the multiplicity results known for the concrete 
model nonlinearity $f(u)=u^p$ with $\frac{N}{N-2}<p<\frac{N+2}{N-2}$.
Our result applies to various nonlinearities, such as 
$f(s)=s^p+s^r$ 
with $0<r<p$,
$f(s)=s^p(\log s)^r$ with $r\in \mathbb{R}$,
$f(s)=s^p\exp((\log s)^r)$ with $0<r<1$ 
and
$f(s)=s^p+s^r(\log s)^{\beta}$ with $0<r<p$ and $\beta \in \mathbb{R}$,
for $\frac{N}{N-2}<p<\frac{N+2}{N-2}$. 
}

\ \\
\noindent
{\small
{\bf Keywords}: 
Semilinear elliptic equations, multiplicity, singular solutions
\vspace{5pt}
\newline
{\bf 2020 MSC}: Primary: 35J61; Secondary: 35A24, 35A02
\vspace{5pt}
}

\section{Introduction}

We consider existence of a singular solution for a nonlinear elliptic equation. 
We say that a positive function $u>0$ 
is a singular solution of 
\begin{equation} 
  \label{eq:1.1}
  -\Delta u = f(u) 
\end{equation}
in a neighborhood of the origin $0\in \mathbb{R}^N$ (which will be simply called a singular solution of \eqref{eq:1.1}) if 
$u\in C^2(B_{r_0}\setminus \{0\})$ satisfies \eqref{eq:1.1} in $B_{r_0}\setminus \{0\}$
and $u(x) \to 0$ as $x\to 0$ for some $r_0>0$. 
Here $N\ge 1$ is the dimension of an underlying space, 
$B_{r_0}$ denotes the ball of radius $r_0>0$ centered at $0$ 
and a positive function $f\in C^2((0,\infty))$ denotes the nonlinear term of the problem. 
The aim of this paper is to prove the multiplicity of singular solutions of \eqref{eq:1.1} 
under certain assumptions for a nonlinear term $f$. 
In particular, the multiplicity of singular solutions will be shown when $f$ is Sobolev subcritical, 
but our method can be applied to a variety of nonlinear problems other than purely power type nonlinear problem.

Let us recall some multiplicity and uniqueness results of positive singular solutions 
of 
\eqref{eq:1.1} for the particular case $f(u)=u^p$ for $N\ge 3$, i.e.,
\begin{equation}\label{eq:1.2}
-\Delta u =u^p.
\end{equation}
The critical exponents 
\begin{equation}
  \label{eq:1.3}
  p_\mathrm{c} := \frac{N}{N-2}, 
  \qquad
  p_\mathrm{S} := \frac{N+2}{N-2}, 
\end{equation}
play a key role in the study of the multiplicity of solutions for \eqref{eq:1.2}.
When $1<p<p_\mathrm{c}$, 
Lions~\cite{L}
showed that 
\eqref{eq:1.2} has infinitely many singular solutions which diverges at the same rate
as the fundamental solution $|\cdot|^{2-N}$
of the Laplace equation (see also \cite{BL}).
If $p>p_\mathrm{c}$, then 
\eqref{eq:1.2} posed in a punctured domain $\mathbb{R}^N \setminus \{0\}$ 
admits an explicit singular solution $v_{p}$ defined by 
\begin{equation}
  \label{eq:1.4}
  v_{p}(x) := L_p|x|^{-\frac{2}{p-1}}, 
  \quad 
  L_p := \left(
    \frac{2}{p-1}\left(
      N-2-\frac{2}{p-1}
    \right)
  \right)^\frac{1}{p-1}, 
\end{equation}
  which is also a distributional solution of \eqref{eq:1.2} in the whole space $\mathbb{R}^N$. 

When $p_\mathrm{c} \le p<p_\mathrm{S}$, Ni-Sacks~\cite{NS}
constructed infinitely many singular solutions of \eqref{eq:1.2}.
On the other hand, when $p>p_\mathrm{S}$, 
Serrin-Zou~\cite{SZ} proved that the function~\eqref{eq:1.4} is the unique radial singular solution of \eqref{eq:1.2}.
As for the behavior of the singular solutions,
Chen-Lin~\cite{CL} showed the precise asymptotic expansion of the singular solutions
for $p_\mathrm{c}<p<p_\mathrm{*}$ (see also \cite{GNW} for related topics), 
where $p_*\in (p_\mathrm{c},p_\mathrm{S})$ is an exponent which will be used later (see \eqref{eq:1.14}).
For the lower border line case $p=p_\mathrm{c}$,
the behavior of singular solutions, $u(x) \simeq |x|^{2-N}(-\log|x|)^{\frac{2-N}2}$ near $0$, are revealed by
Aviles~\cite{A}.
For the upper border line case $p=p_\mathrm{S}$,
Caffarelli-Gidas-Spruck~\cite{CGS} proved 
that \eqref{eq:1.2} has a continuum of singular solutions. 
It should be mentioned that
these singular solutions for $p_\mathrm{c}\le p$ also satisfy equation~\eqref{eq:1.2} in a non-punctured ball in the sense of distributions, while for $1<p<p_\mathrm{c}$ the singular solutions satisfies 
$-\Delta u=u^p+\alpha \delta_0$ in the sense of distributions 
for some $\alpha>0$,
where $\delta_0$ is the delta function supported at the origin.

As a corresponding case to $p=\infty$
for $N\ge 3$,
exponential type nonlinearities 
were investigated and singular solutions to 
\eqref{eq:1.1} were constructed by Mignot-Puel \cite{MP} for $f(u)=e^u$,
Kikuchi-Wei~\cite{KW} for $f(u)=e^{u^q} (q>0)$,
Ghergu-Goubet~\cite{GG} for multiple exponential nonlinearities.
More general nonlinearities
in the form of $f(u)=u^p+\sigma(u)$
or $f(u)=e^u+\sigma(u)$ with a lower order term $\sigma(u)$
have been widely studied. 
Since there are a vast number of researches, we only refer to
\cites{J,M2,M3,MN2,MN3}; see also references therein.
Meanwhile, some recent developments for 2-dimensional problems can be found in 
\cites{FIRT,IKNW,Kumagai,Naimen}.

Recently, semilinear elliptic and parabolic problems with general superlinear source term $f(u)$, which is not necessarily a perturbed form from the model case $f(u)=u^p$,
are considered. 
One of triggers to accelerate generalization is 
the quasi scale invariance\footnote{
  Although the original discussion in \cite{F} was done for 
  parabolic problems, 
  it can be applied to elliptic equations by excluding the time variable.
} 
introduced in the first named author's work \cite{F}, that is,
\begin{equation}\label{eq:1.5}
u_\lambda(x):=F^{-1}\left(
\lambda^{-2} F(u(\lambda x))
\right) \quad (\lambda>0),
\quad \mbox{where} \,\,\, F(s):=\int_s^\infty \frac{d\eta}{f(\eta)}, 
\end{equation}
and
$F^{-1}$ is the inverse function of $F$ which is a non-increasing function. 
Then, for a solution $u$ of \eqref{eq:1.1},
the
scaled function $u_{\lambda}$ 
satisfies
\begin{equation}\label{eq:1.6}
-\Delta u_\lambda=f(u_\lambda)+ \frac{|\nabla u_\lambda|^2}{f(u_\lambda)F(u_\lambda)}\Big[
f'(u) F(u)-f'(u_\lambda)F(u_\lambda)
\Big].
\end{equation}
Inspired by \eqref{eq:1.5} and \eqref{eq:1.6}, 
the authors introduced in \cite{FI}
a classification of nonlinearities by the limit
\begin{equation}
  \label{eq:1.7}
  q_f = \lim_{u\to\infty} f'(u)F(u), 
\end{equation}
and a transformation as follows:
if $v$
satisfies 
\(
-\Delta v=g(v),
\) 
then
\begin{equation}\label{eq:1.8}
	\tilde u(x):=F^{-1}\left(G(v(x))\right) \ 
\end{equation}
satisfies
\begin{equation}\label{eq:1.9}
	-\Delta \tilde u=f(\tilde u)
	+\frac{|\nabla{\tilde u}|^2}{f(\tilde u)F(\tilde u)}\Big[ g'(v)G(v)-f'(\tilde u)F(\tilde u)\Big],
\end{equation}
where $G(s):=\displaystyle\int_s^{\infty}\frac{d\eta}{g(\eta)}$.
Let $p_f$ be the  H\"older conjugate of $q_f$ defined 
by 
\[
\frac{1}{p_f}+\frac{1}{q_f}=1.
\]
It should be mentioned that 
if $q_f>1$ then $g(v)=v^{p_f}$ satisfies $g'(s)G(s)=q_f$ for all $s>0$, namely 
the power nonlinearity is a model nonlinearity which satisfies $\eqref{eq:1.7}$. 
Heuristically, the remainder term in \eqref{eq:1.9} 
is no longer affected near the singularity of the solution from \eqref{eq:1.7}
when $g(v) = v^{p_f}$, 
so it is reasonable to assume that 
\eqref{eq:1.9} can be used to investigate 
the singular solution of \eqref{eq:1.1}.
Therefore, via 
the transformation~\eqref{eq:1.8} and
the transformed equation~\eqref{eq:1.9},
we consider relating the general nonlinear problem with a general nonlinearity $f$
to the nonlinear problem with a pure power nonlinearity which is a model case. 
Note that 
the function $v_{p_f}$ defined in \eqref{eq:1.4} with $p=p_f$
satisfies $-\Delta v=v^{p_f}$ and 
\begin{equation}
  \label{eq:1.10}
  G(v_{p_f}(x))=\int_{v_p}^{\infty}\frac{ds}{s^{p_f}}=(q_f-1)v^{-\frac{1}{q_f-1}}=(p_f-1)\left(L_{p_f}|x|^{-\frac{2}{p_f-1}}\right)^{-\frac{1}{p_f-1}}
  =\frac{|x|^2}{2N-4q_f}.
\end{equation}
In conclusion, 
one can expect that 
\begin{equation}
  \label{eq:1.11}
  \tilde u(x):=F^{-1}\left(G(v_{p_f}(x))\right)
  =F^{-1}\left(\frac{|x|^2}{2N-4q_f}\right)
\end{equation}
is an approximate singular solution of $-\Delta u=f(u)$
for general nonlinearities $f$ which satisfy \eqref{eq:1.7}.
Indeed, 
Miyamoto~\cite{M} constructed a singular solution of \eqref{eq:1.1} with $f\in C^2([0,\infty))$ satisfying
$p_\mathrm{S}<p_f$ in the form of
\[
  u(x)=F^{-1}\left(\frac{|x|^2}{2N-4q_f}(1+o(1))\right)\ \ \text{as}\ \ x\to 0.
\]
Later, Miyamoto-Naito~\cite{MN} proved 
that the singular solution constructed in \cite{M} is 
the unique radially symmetric singular solution of \eqref{eq:1.1}.
Since $f(u)=u^p$ is a model nonlinearity satisfying
$p_f=p$, 
their uniqueness result  is a fully generalization of \cite{SZ}.
See \cites{FI,FI2,FI3,FI4,FIRT,FHIL,M,MN} for more developments on elliptic and parabolic problems with general superlinear source terms.

In this paper, we focus on the region $p_\mathrm{c}<p_f<p_\mathrm{S}$ and construct infinitely many singular solution 
for general 
nonlinearity 
satisfying \eqref{eq:1.7}.
To state our result, we further assume 
that L'H\^{o}pital form of the limit \eqref{eq:1.7} which is defined by 
\begin{equation}
  \label{eq:1.12}
  q_f := \lim_{u\to\infty} \frac{f'(u)^2}{f(u)f''(u)}
\end{equation}
exists.
It is pointed out by Miyamoto~\cite{M}
that
the limit \eqref{eq:1.12} is originally introduced by Dupagne-Farina~\cite{DF}, 
and if \eqref{eq:1.12} is satisfied then $f$ also satisfies \eqref{eq:1.7} as
\[
\lim_{u\to\infty} f'(u)F(u)
=
\lim_{u\to\infty} \frac{F(u)}{1/f'(u)}
=
\lim_{u\to\infty} \frac{f'(u)^2}{f(u)f''(u)}=q_f
\]
by L'H\^{o}pital rule. 
Let 
\[
  a:=\frac{4}{p_f-1}-N+2,\qquad b=2N-4q_f.
\]
If $p_\mathrm{c}<p_f<p_\mathrm{S}$, then 
the quadratic equation 
\begin{equation}
  \label{eq:1.13}
  \lambda^2 +a\lambda+b = 0
\end{equation}
has roots whose real parts are negative. 
Let $p_*>1$ be 
\begin{equation}
  \label{eq:1.14} 
  p_* := 1+\frac{4}{N-4+2\sqrt{N-1}}, 
\end{equation}
which satisfies $p_*\in (p_\mathrm{c},p_\mathrm{S})$ for $N\ge 3$. 
When $p_\mathrm{c}<p_f<p_*$, 
equation~\eqref{eq:1.13} has two different 
negative roots $-\lambda_2<-\lambda_1<0$.
When $p_f=p_*$, equation~\eqref{eq:1.13} has the unique 
negative root $-\lambda_*$. 
When $p_*<p_f<p_\mathrm{S}$, equation~\eqref{eq:1.13} has two 
roots that are complex conjugate to each other, 
and they are given by $-\frac{a}{2} \pm ki$ for $k>0$. 
Set 
\begin{equation}
  \label{eq:1.15}
  \displaystyle 
  P(r,s)
  :=
  \left\{
    \begin{aligned}
  &
  \left(\frac{r}{s}\right)^{\lambda_1}
      && \mbox{if} \,\,\, p_\mathrm{c}<p_f<p_*, 
      \\
  &
  \left(1+\log\frac{s}{r}\right)\left(\frac{r}{s}\right)^{\lambda_*}
      && \mbox{if} \,\,\, p_f=p_*, 
      \\
  &
  \left(\frac{r}{s}\right)^{\frac{a}{2}}
      && \mbox{if} \,\,\, p_*<p_f<p_\mathrm{S}.
    \end{aligned}
    \right.
\end{equation}
The following theorem shows us that equation~\eqref{eq:1.1} admits infinitely many singular solutions. 
\begin{theorem}\label{Theorem:1.1}
  Let $N\ge 3$, $f\in C^2((0,\infty))$ satisfy $f,f'>0$ for $s>0$ and 
  the limit \eqref{eq:1.12} exists 
  and $\tilde{u}$ be the function defined by \eqref{eq:1.11}. 
  If $p_\mathrm{c}<p_f<p_\mathrm{S}$, 
  then there exist 
  $r_0,\epsilon>0$ such that for any $\alpha,\beta>0$ with $\alpha+\beta<\epsilon$,
  equation \eqref{eq:1.1} 
  possesses 
  a radially symmetric singular solution in the form of
  \begin{equation}
    \notag 
    u(x)
    = \tilde{u}(x) \left(1+\theta\bigl(|x|\bigr)\right)
    =
    F^{-1}\left(\frac{|x|^2}{2N-4q_f}\right)\left(1+\theta\bigl(|x|\bigr)\right),
  \end{equation}
  where $\theta\in C^2((0,r_0))\cap C^1([0,r_0])$ satisfies
  $\theta(r_0)=\alpha$, $r_0\theta'(r_0)=-\beta$ and 
  \begin{equation}
    \notag 
\left|\theta\bigl(|x|\bigr)\right| + |x|\left|\theta'\bigl(|x|\bigr)\right|
    =
    O\left(
    \int_{|x|}^{r_0}
    P\bigl(|x|,s\bigr)\left|f'\bigl(\tilde u(s)\bigr)F\bigl(\tilde u(s)\bigr)-q_f\right|
    \frac{ds}{s}
    \right)
  \end{equation}
  as $x\to 0$.
\end{theorem}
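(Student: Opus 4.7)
The plan is to substitute $u(x)=\tilde u(r)(1+\theta(r))$ with $r=|x|$ into the radial form of \eqref{eq:1.1} and reduce the construction of $u$ to a contraction mapping for $\theta$. Since $F(\tilde u)=r^2/(2N-4q_f)$ and $F'=-1/f$, a direct computation produces the approximate-solution identity
\[
-\Delta\tilde u-f(\tilde u)=-\frac{2f(\tilde u)}{N-2q_f}\bigl(f'(\tilde u)F(\tilde u)-q_f\bigr),
\]
so that the error is driven by $|f'(\tilde u)F(\tilde u)-q_f|$, which vanishes as $r\to 0$ by \eqref{eq:1.7}. After inserting the ansatz, dividing by $\tilde u$, and passing to the logarithmic variable $t=-\log r$ with $\Theta(t):=\theta(e^{-t})$, the equation takes the form
\[
\Theta_{tt}+a\Theta_t+b\Theta=\mathcal A(t)\Theta_t+\mathcal B(t)\Theta+\mathcal E(t)+\mathcal N(t,\Theta),
\]
whose characteristic polynomial is precisely $\lambda^2+a\lambda+b$ from \eqref{eq:1.13}. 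The variable coefficients satisfy $\mathcal A(t),\mathcal B(t)\to 0$ as $t\to\infty$ (since $r\tilde u'/\tilde u\to -2/(p_f-1)$), the inhomogeneity $\mathcal E$ has size $|f'(\tilde u)F(\tilde u)-q_f|$, and $\mathcal N$ is the quadratic-and-higher Taylor remainder of $f(\tilde u(1+\theta))-f(\tilde u)-f'(\tilde u)\tilde u\theta$, divided by $\tilde u$.

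The three branches of $P(r,s)$ in \eqref{eq:1.15} are exactly the envelopes of the decaying half of the Green's function for $\partial_t^2+a\partial_t+b$ on $[t_0,\infty)$, where $t_0:=-\log r_0$, written back in the variable $r=e^{-t}$. Using this Green's function together with the boundary conditions $\theta(r_0)=\alpha$, $r_0\theta'(r_0)=-\beta$ (equivalent to $\Theta(t_0)=\alpha$, $\Theta_t(t_0)=\beta$), I rewrite the boundary value problem as the integral equation
\[
\theta(r)=\theta_{\mathrm{hom}}(r)+\int_r^{r_0}\mathcal G(r,s)\bigl[\mathcal E+\mathcal A\Theta_t+\mathcal B\Theta+\mathcal N\bigr]\frac{ds}{s},
\]
with $|\mathcal G(r,s)|\lesssim P(r,s)$ and $\theta_{\mathrm{hom}}$ the homogeneous solution carrying the data, of size $\alpha+\beta$ times the natural envelope of the two decaying modes.

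The fixed point argument is then set up in the weighted Banach space
\[
X:=\Bigl\{\theta\in C^1((0,r_0])\ :\ \|\theta\|_X:=\sup_{0<r\le r_0}\frac{|\theta(r)|+r|\theta'(r)|}{W(r)}<\infty\Bigr\},
\]
where $W(r)$ combines the homogeneous envelope with $\int_r^{r_0}P(r,s)|f'(\tilde u(s))F(\tilde u(s))-q_f|\,ds/s$. For $\alpha+\beta$ sufficiently small the associated map sends a small ball of $X$ into itself and is a contraction there: the inhomogeneous term $\mathcal E$ reproduces $W$ by construction, the variable-coefficient correction $\mathcal A\Theta_t+\mathcal B\Theta$ gains a small factor from $\|\mathcal A\|_\infty+\|\mathcal B\|_\infty$ on $[t_0,\infty)$, and $\mathcal N$ is controlled by $\|\theta\|_X^2W(r)$ because it is quadratic in $\theta$. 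The unique fixed point is the desired $\theta$, and the definition of the $X$-norm delivers the asserted pointwise bound.

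The main technical obstacle I anticipate is establishing, uniformly in the three regimes of $p_f$, the convolution inequality
\[
\int_r^{r_0}P(r,s)\,W(s)\,\frac{ds}{s}\le C\,W(r),\qquad 0<r<r_0,
\]
together with a parallel estimate for $r|\theta'(r)|$, which is what makes the iteration stable and yields the stated bound. The complex-root regime $p_*<p_f<p_\mathrm{S}$ is the most delicate, since the signed Green's function oscillates and only its envelope $(r/s)^{a/2}$ is retained in the statement; the resonant case $p_f=p_*$ further requires careful bookkeeping of the logarithmic factor in $P(r,s)$ under composition, and in all three cases one must verify that the error integral $W(r)$ is nonzero but small near $r=0$ so that the contraction closes.
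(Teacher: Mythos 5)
Your overall route is the same as the paper's: the ansatz $u=\tilde u(1+\theta)$, the identity $-\Delta\tilde u-f(\tilde u)=-\frac{2f(\tilde u)}{N-2q_f}\bigl(f'(\tilde u)F(\tilde u)-q_f\bigr)$, the logarithmic (Emden) variable, variation of parameters with a kernel whose envelope is $P$, and a fixed point in a weighted $C^1$ space; this is exactly Sections~2--3 of the paper. The genuine gap is in the step you yourself single out as the main obstacle: the convolution inequality $\int_r^{r_0}P(r,s)\,W(s)\,\frac{ds}{s}\le C\,W(r)$ is false in general. Writing $\rho=\log(1/r)$ and $Q(\rho,\tau)=P(r,s)$, the homogeneous-envelope part of $W$ already gives, in the non-resonant cases, $\int_{\rho_0}^{\rho}e^{-\Lambda(\rho-\tau)}e^{-\Lambda(\tau-\rho_0)}\,d\tau=(\rho-\rho_0)\,e^{-\Lambda(\rho-\rho_0)}$, i.e.\ a loss of the unbounded factor $\log(r_0/r)$; and when $|f'(\tilde u)F(\tilde u)-q_f|$ decays faster than the slow mode (case~(A) of Remark~3.1, e.g.\ $f(s)=s^p+s^r$ with $r<r^*$) one has $W(r)\simeq P(r,r_0)$, so no part of $W$ absorbs that factor. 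Your proposed remedy, the smallness of $\|\mathcal A\|_\infty+\|\mathcal B\|_\infty$ for $r_0$ small, only contributes a small constant and cannot compensate a loss that blows up as $r\to0$; the same logarithmic loss reappears in your treatment of the nonlinear term if one estimates it by $\|\theta\|_X^2\int_r^{r_0}P(r,s)W(s)^2\frac{ds}{s}$, through the iterated-integral part of $W^2$. So, as written, neither the self-mapping property nor the contraction closes.

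What the paper does instead is precisely designed to avoid any kernel-with-kernel convolution. For the linear terms it keeps $|L_1|,|L_2|$ under the integral, bounds $Q(\tau,\rho_0)$ and $\sup_{\tau\ge\rho_0}\int_{\rho_0}^{\tau}Q(\tau,t)|I(t)|\,dt$ by constants (the latter small for $\rho_0$ large, Lemma~2.2), and then invokes Lemma~2.4, an FTC/integration-by-parts estimate showing $\int_{\rho_0}^{\rho}Q(\rho,\tau)\bigl|\frac{f(\phi)F(\phi)}{\phi}-\frac{1}{p_f-1}\bigr|\,d\tau\lesssim\int_{\rho_0}^{\rho}Q(\rho,\tau)\bigl|f'(\phi)F(\phi)-q_f\bigr|\,d\tau+\epsilon_{\rho_0}Q(\rho,\rho_0)$, which lands the output back in the weight itself; the quadratic term is handled by repeated integration by parts (the $N_1,N_2,N_{2,1},N_{2,2}$ estimates), again producing the small factor $\sup_{\tau\ge\rho_0}\int_{\rho_0}^{\tau}Q(\tau,t)|I(t)|\,dt$ rather than a $Q*Q$ bound. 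Note also that Lemma~2.4 is what permits the final estimate to be stated purely in terms of $|f'(\tilde u)F(\tilde u)-q_f|$: the coefficients contain $\frac{f(\tilde u)F(\tilde u)}{\tilde u}-\frac{1}{p_f-1}$, which is not pointwise dominated by that quantity, and your sketch supplies no substitute for this step. To repair your argument you would have to replace the claimed convolution inequality by integrated estimates of this type (or an equivalent device), not by sup-norm smallness of the coefficients.
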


As an application of Theorem~\ref{Theorem:1.1}, 
we consider a typical example of the nonlinear term, $f(u) = u^p+u^r$, 
where $p>1$ and $0<r<p$. 
Let $r^*$ be the exponent defined by
\begin{equation}
  \label{eq:1.16}
  r^*
  :=
  \left\{
  \begin{aligned}
  &
  \frac{p-1}{4}\left(N-2-\sqrt{\left(N-2-\frac{4}{p-1}\right)^2-8\left(N-2-\frac{2}{p-1}\right)}\right)
  && \text{if} \ p_\mathrm{c}< p<p_*,
  \\
  &
  \frac{(p-1)(N-2)}{4} 
  && \text{if} \ p_*\le p<p_\mathrm{S}.
  \end{aligned}
  \right.
\end{equation}
Applying Theorem~\ref{Theorem:1.1} to \eqref{eq:1.1} with this nonlinear term, 
we obtain the following result. 
\begin{corollary}
  \label{Corollary:1.1}
  Let $N\ge 3$ and $f(s)=s^p+s^r$ with $p_\mathrm{c}<p<p_\mathrm{S}$ and $0<r<p$. 
  Then the same conclusion as Theorem~$\mathrm{\ref{Theorem:1.1}}$ holds. Furthermore, 
  the upper estimates of $|\theta(|x|)| + |x||\theta'(|x|)|$ as $|x| \to 0$
  are given by a constant multiple of the functions in Table~$\mathrm{\ref{table:1}}$ according to the values of $p$ and $r$.  
  \begin{table}[htbp]
    \centering
    {\renewcommand\arraystretch{2.3}
      \begin{tabular}{|c||c|c|c|}
        \hline 
        $\left|\theta\bigl(|x|\bigr)\right| + |x|\left|\theta'\bigl(|x|\bigr)\right|$
        & $p_\mathrm{c}<p<p_*$ 
        & $p=p_*$ 
        & $p_*<p<p_\mathrm{S}$  
        \\[1mm]
        \hline\hline 
        $\displaystyle 0< r<r^*$ 
        & $\displaystyle |x|^{\lambda_1}$
        & $\displaystyle |x|^{\lambda_*}\log\frac{1}{|x|}$
        &
        $\displaystyle |x|^{\frac{a}{2}}$
        \\[2mm]
        \hline
        $\displaystyle r=r^*$ 
        & $\displaystyle  |x|^{\lambda_1 }\log\frac{1}{|x|}$
        & $\displaystyle  |x|^{\lambda_*}\left(\log\frac{1}{|x|}\right)^2$
        &
        $\displaystyle  |x|^{\frac{a}{2}}\log\frac{1}{|x|}$
        \\[2mm]
        \hline
        \rule[0cm]{-1.5mm}{9mm}
        \raisebox{4pt}{$\displaystyle r^*<r<p$}
        & 
        \multicolumn{3}{c|}{
          \raisebox{4pt}{$\displaystyle |x|^{\frac{2(p-r)}{{p-1}}}$}
        }
        \\
        \hline 
      \end{tabular}
    }
    \caption{Upper estimates of $\left|\theta\bigl(|x|\bigr)\right| + |x|\left|\theta'\bigl(|x|\bigr)\right|$}\label{table:1}
  \end{table}
\end{corollary}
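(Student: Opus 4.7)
The plan is to apply Theorem~\ref{Theorem:1.1} to the specific nonlinearity $f(s) = s^{p}+s^{r}$ and then to read off the tabulated estimates by evaluating the integral in its conclusion case by case.

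First I would check that $f$ satisfies the standing hypotheses. Positivity of $f$ and $f'$ on $(0,\infty)$ is immediate, and for the limit~\eqref{eq:1.12} the leading $s^{p}$-terms of $f$, $f'$ and $f''$ dominate as $s\to\infty$, so $\lim_{u\to\infty} f'(u)^{2}/(f(u)f''(u)) = p^{2}/(p(p-1)) = p/(p-1)$. Hence $q_f = p/(p-1)$ and $p_f = p \in (p_c, p_S)$, so Theorem~\ref{Theorem:1.1} produces a singular solution $u = \tilde u(1+\theta)$ of the stated form with the stated integral control on $|\theta(|x|)|+|x||\theta'(|x|)|$.

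Next I would compute the asymptotic of the integrand $|f'(\tilde u(s))F(\tilde u(s)) - q_f|$ as $s\to 0$. Expanding
\[
F(u) = \frac{u^{1-p}}{p-1} - \frac{u^{1+r-2p}}{2p-r-1} + O\!\bigl(u^{1+2(r-p)-p}\bigr) \qquad (u\to\infty)
\]
and multiplying by $f'(u) = p u^{p-1} + r u^{r-1}$, the constant $q_f$ cancels and one finds
\[
f'(u)F(u) - q_f = \frac{(p-r)(r-p+1)}{(p-1)(2p-r-1)}\, u^{r-p} + O\!\bigl(u^{2(r-p)}\bigr).
\]
Inverting $F(\tilde u(s)) = s^{2}/(2N-4q_f)$ gives $\tilde u(s) \sim A\, s^{-2/(p-1)}$ for an explicit $A>0$, whence
\(
|f'(\tilde u(s))F(\tilde u(s)) - q_f| \asymp s^{2(p-r)/(p-1)}
\)
for every $r\neq p-1$; at the exceptional value $r=p-1$ the leading coefficient vanishes and the $O(u^{2(r-p)})$ term controls, yielding the same type of bound with a faster rate which still fits inside the tabulated estimate.

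Finally I would substitute this asymptotic into the integral bound of Theorem~\ref{Theorem:1.1}. In each of the three $p$-regimes the problem reduces, according to~\eqref{eq:1.15}, to evaluating an elementary power-log integral of the form
\[
\int_{|x|}^{r_{0}} s^{2(p-r)/(p-1) - \mu - 1}\bigl(1+\log(s/|x|)\bigr)^{j}\,ds,
\]
with $\mu \in \{\lambda_{1}, \lambda_{*}, a/2\}$ and $j \in \{0,1\}$ determined by $p$. Its behaviour as $|x|\to 0$ is dictated by the sign of the exponent $2(p-r)/(p-1)-\mu$ at the lower limit of integration: a positive value gives a constant multiple of $|x|^{\mu}$ (times the appropriate power of $\log(1/|x|)$ when $p=p_{*}$), the critical value zero injects an extra factor of $\log(1/|x|)$, and a negative value gives a constant multiple of $|x|^{2(p-r)/(p-1)}$. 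These three alternatives correspond to the three rows of Table~\ref{table:1}, and the threshold value of $r$ obtained by setting the exponent to zero reproduces the formula~\eqref{eq:1.16} for $r^{*}$ via the identities relating $\lambda_{1}$, $\lambda_{*}$, $a$ to the quadratic~\eqref{eq:1.13}. The main technical obstacle is the bookkeeping of these nine cases, in particular the squared logarithm in the middle cell of the $p=p_{*}$ column produced by the interaction of the threshold logarithm with the kernel's own $1+\log(s/|x|)$ factor.
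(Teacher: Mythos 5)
Your proposal is correct and takes essentially the same route as the paper: the paper deduces the corollary from Example~\ref{Example:4.1}, where the expansions \eqref{eq:4.2}--\eqref{eq:4.4} give $I(\rho)\simeq e^{-\frac{2(p-r)}{p-1}\rho}$ and the table then follows from Theorem~\ref{Theorem:3.1} and Remark~\ref{Remark:3.1} in the Emden variable $\rho=\log(1/s)$, which is exactly your computation of $|f'(\tilde u(s))F(\tilde u(s))-q_f|\asymp s^{\frac{2(p-r)}{p-1}}$ followed by the case-by-case power--log integral, merely written in the original radial variable. The only differences are minor: you explicitly treat the degenerate value $r=p-1$, where the leading coefficient vanishes (the two-sided relation \eqref{eq:4.5} silently fails there, though only the upper bound is needed), and, like the paper, you characterize the threshold by $\frac{2(p-r^*)}{p-1}=\Lambda$ and assert rather than verify its identification with the closed formula \eqref{eq:1.16}.
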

See Section~\ref{section:4}
for further examples, e.g., 
\begin{itemize}
  \item $f(s)=s^p(\log s)^r$ with $p_\mathrm{c}<p<p_\mathrm{S}$ and $r\in \mathbb{R}$. 
  \item $f(s)=s^p\exp((\log s)^r)$ with $p_\mathrm{c}<p<p_\mathrm{S}$ and $0<r<1$. 
  \item $f(s)=s^p+s^r(\log s)^{\beta}$ with $p_\mathrm{c}<p<p_\mathrm{S}$, $0<r<p$ and $\beta \in \mathbb{R}$.
\end{itemize}

\begin{remark}
\label{Remark:1.1}
By examining the asymptotic behavior of
the function $F^{-1}(\sigma)$ as $\sigma \to +0$, 
one can obtain 
the specific divergence rates
of the singular solutions constructed in Theorem~$\mathrm{\ref{Theorem:1.1}}$. 
Since this is not our aim, we only give the computation for $f(s)=s^p+s^r$ in Appendix. 
\end{remark}

We provide a summary of the discussions in this paper.
Keeping in mind that function~\eqref{eq:1.11} is the principal term of singular solutions, 
we show that there exist infinitely many radially symmetric solutions of \eqref{eq:1.1} in the form of 
\begin{equation}
  \label{eq:1.17}
  F^{-1}\left(\frac{|x|^2}{2N-4q_f}\right)(1+\theta(|x|)). 
\end{equation}
To this end, we derive the equation for $\theta$ and show the multiplicity of $\theta$. 
After applying the Emden transformation $\eta(\rho):=\theta(|x|)$
with $\rho:=\log (1/|x|)$ to the equation regarding $\theta$, 
we get a nonlinear equation regarding $\eta$. 
One major difference from the pure power case is the form of the nonlinear equation of $\eta$. 
In the case of pure power nonlinear terms,
the equation of $\eta$, which is eventually an ordinary differential equation, is autonomous, the boundedness of $\eta$ is known and 
the set of the steady states is discrete. 
Therefore, the behavior of $\eta=\eta(\rho)$ as $\rho\to\infty$ can be studied in detail. 
In the present case, however, 
the equation is not autonomous and the boundedness of $\eta$ is unknown, 
so it is generally difficult to investigate the behavior of $\eta$ as $\rho\to\infty$. 
Therefore, in this paper, 
we rewrite the equation as an integral equation and find desired solutions using the fixed point theorem.

The next question is how to find multiple solutions for the equation of $\eta$. 
The key to this point is that the characteristic equation of the linear part of the equation 
is given by \eqref{eq:1.13} (see \eqref{eq:2.11}). 
Thanks to $p_f\in (p_{\mathrm{c}},p_{\mathrm{S}})$, 
the real parts of roots of \eqref{eq:1.13} are negative, 
so the related linear ordinary differential equation has two fundamental solutions 
which decay to zero as $\rho\to \infty$. 
This fact suggests that when converting the nonlinear equation to integral equations, 
the degree of freedom of the fundamental solutions arises (see \eqref{eq:2.15}), 
and since the fact that adding the fundamental solution causes the solution to decay to zero remains unchanged, 
the possibility arises that infinitely many $\eta(\rho)$, i.e., $\theta(|x|)$ which decay to zero may exist.
Since the multiplicity of $\theta$ implies the multiplicity of solutions of the form \eqref{eq:1.17}, 
there will be infinitely many singular solutions of \eqref{eq:1.1}.
The important point is that, since there are many ways to rewrite integral equations, 
we can demonstrate the multiplicity of solutions to the equation of $\eta$ 
by proving the existence of a solution to each integral equation. 

\medskip 

The organization of this paper is as follows. 
Section~\ref{section:2} contains some auxiliary results
to analyze equation~\eqref{eq:1.1} with general superlinear nonlinearity $f(u)$.
In Section~\ref{section:3} we give the proof of Theorem~\ref{Theorem:1.1}.
Finally, in Section~\ref{section:4} we focus on several examples and give the estimate of $\theta(|x|)$ as $x\to 0$.

\section{Preliminaries}
\label{section:2}

In this section we derive the equation for $\theta$ and rewrite the equation 
using the Emden transformation. 
Furthermore, we introduce useful lemmas to prove main results. 

In what follows, we use the following notation. 
For any set $\Sigma$, let $g=g(\sigma)$ and $h=h(\sigma)$ 
be maps from $\Sigma$ to $(0,\infty)$. 
Then we say 
$$
g(\sigma)\lesssim h(\sigma)
$$
for all $\sigma\in\Sigma$
if there exists a positive constant $C$ such that $g(\sigma)\le Ch(\sigma)$ 
for all $\sigma\in\Sigma$. 
In addition, we say 
$$
g(\sigma)\simeq h(\sigma)
$$ 
for all $\sigma\in\Sigma$
if $g(\sigma)\lesssim h(\sigma)$ and $h(\sigma)\lesssim g(\sigma)$ for all $\sigma\in\Sigma$.
Furthermore, since all functions of $x\in\mathbb{R}^N$ 
appearing hereafter are radially symmetric, 
they are assumed to be functions of $r = |x|$ for the sake of brevity.

\subsection{Derivation of the equation for the remainder term}
We derive an equation for $\theta$ and rewrite it using the Emden transformation. 
In the following, we consider only radially symmetric solutions of \eqref{eq:1.1}, 
so we use $u(x)$ and $u(|x|)$ interchangeably. 

Suppose that 
limit \eqref{eq:1.12} exists and 
set 
\begin{equation}
  \label{eq:2.1}
  g(s) := s^{p_f}, 
  \qquad 
  G(s) := \int_s^\infty \frac{1}{g(\tau)}\, d\tau 
  = 
  \frac{1}{p_f-1}s^{-(p_f-1)}. 
\end{equation}
Note that $p_f$ is the H\"{o}lder conjugate of $q_f$. 
Then we have 
\begin{equation}
  \label{eq:2.2}
  g'(s)G(s) = q_f 
  \quad\mbox{for all}\,\,\, s>0. 
\end{equation}
Let $v_{p_f}(r)$ with $r=|x|$    
be a singular solution of \eqref{eq:1.1} with $f(s)=g(s)$, 
defined by \eqref{eq:1.4} with $p=p_f$.  
Then it holds
\begin{equation}\label{eq:2.3}
-v_{p_f}''-\frac{N-1}{r} v_{p_f}=g(v_{p_f}).
\end{equation}
Set 
\begin{equation}
  \label{eq:2.4}
  \tilde u(r) := F^{-1}(G(v_{p_f}(r)))=F^{-1}\left(
    \frac{r^2}{2N-4q_f} 
  \right),
\end{equation}
where the last equality follows from \eqref{eq:1.10}.

In the following, we seek to find a singular solution to \eqref{eq:1.1} of the following form: 
\begin{equation}
  \label{eq:2.5}
  u(r) = 
  \tilde u(r)(1 + \theta(r)) 
  \quad\mbox{with}\quad r=|x|, \,\,\, 
  \theta \in C^2((0,r_0)) \cap C^1((0,r_0]), 
\end{equation}
for some $r_0>0$, 
where $\tilde{u}$ is the function defined by \eqref{eq:1.11}. 
We also define the following function 
\begin{equation}
  \notag 
  \begin{aligned}
    \tilde{I}(r) 
    & := 
    \frac{(\tilde u')^2}{f(\tilde u)F(\tilde u)}\left[
      f'(\tilde u)F(\tilde u) - q_f
    \right], 
  \end{aligned}
\end{equation}
for $r\in (0,r_0)$. 
It follows from \eqref{eq:2.4} that 
$F(\tilde u)=G(v_{p_f})$, and differentiating both sides of this equality yields 
\[
  \frac{\tilde u'}{f(\tilde u)}=\frac{v_{p_f}'}{g(v_{p_f})}.
\]
Therefore, 
\begin{equation}
  \notag 
  \begin{aligned} 
    \tilde u'' 
    &= 
    \frac{f(\tilde u)}{g(v_{p_f})}v_{p_f}''+\frac{f'(\tilde u)g(v_{p_f})\tilde u'-f(\tilde u)g'(v_{p_f})v_{p_f}'}{g(v_{p_f})^2}v_{p_f}'
    = 
    \frac{f(\tilde u)}{g(v_{p_f})}v_{p_f}''+\frac{f'(\tilde u)-g'(v_{p_f})}{f(\tilde u)}(\tilde u')^2 
    \\
    &= 
    \frac{f(\tilde u)}{g(v_{p_f})}v_{p_f}''+\frac{f'(\tilde u)F(\tilde u)-g'(v_{p_f})G(v_{p_f})}{f(\tilde u)F(\tilde u)}(\tilde u')^2.
  \end{aligned}
\end{equation}
Thanks to 
\eqref{eq:2.2} and
\eqref{eq:2.3}, 
we have 
\begin{equation}
  \notag 
  -\tilde u''-\frac{N-1}{r}\tilde u'=f(\tilde u)-\tilde{I}(r).
\end{equation}
Furthermore, since 
\begin{equation}
  \label{eq:2.6}
  \tilde{u}'(r) = -f(\tilde{u}(r))\frac{r}{N-2q_f} = -\frac{2}{r}f(\tilde{u}(r))F(\tilde{u}(r)) 
\end{equation}
and 
\begin{equation}
  \notag
  \begin{aligned}
    -u''-\frac{N-1}{r}u' 
    = 
    \left(
      -\tilde u''-\frac{N-1}{r}\tilde u'
    \right)(1+\theta)
    -\tilde u\theta''-\left(\frac{N-1}{r}\tilde u+2\tilde{u}'\right)\theta', 
  \end{aligned}
\end{equation}
we see that $u$ satisfies \eqref{eq:1.1} in $B_{r_0}\setminus \{0\}$ if and only if 
$\theta$ satisfies
\begin{equation}
  \notag 
  \tilde u \theta'' + \left(\frac{N-1}{r}\tilde u-\frac{4f(\tilde u)F(\tilde u)}{r}\right)\theta' 
  + (\tilde{I}(r) - f(\tilde u))\theta 
  + \tilde{I}(r)
  + f(\tilde u(1+\theta)) - f(\tilde u) 
  = 0
\end{equation}
on $(0,r_0)$. 

Set 
\begin{equation}
  \label{eq:2.7}
  \eta(\rho) := \theta(r) 
  \quad\mbox{and}\quad 
  \phi(\rho) := \tilde u(r)
  \quad\mbox{with}\quad 
  \rho=\log (1/r). 
\end{equation}
Since $\eta'(\rho) = -e^{-\rho}\theta'(r)$ and 
$\eta''(\rho) = e^{-\rho}\theta'(r) + e^{-2\rho}\theta''(r)=-\eta'(\rho)+ e^{-2\rho}\theta''(r)$
and \eqref{eq:2.4} yields $e^{-2\rho} = r^2 = (2N-4q_f)F(\phi)$, 
putting 
\begin{equation}
  \label{eq:2.8}
  I(\rho) := \frac{r^2 \tilde{I}(r)}{\tilde u(r)} 
  = 
  \frac{4f(\phi)F(\phi)}{\phi} \left(
    f'(\phi)F(\phi) - q_f
  \right), 
\end{equation}
we have 
\begin{equation}
  \label{eq:2.9}
  \begin{aligned}
    \eta''+\eta'-\left(
      N-1-\frac{4f(\phi)F(\phi)}{\phi}
    \right)\eta'
    + \left(
      I(\rho) - (2N-4q_f)\frac{f(\phi)F(\phi)}{\phi}
    \right) \eta 
    + I(\rho) 
    \quad 
    \\
    +(2N-4q_f)\frac{F(\phi)}{\phi}\left(
      f(\phi(1+\eta)) - f(\phi)
    \right)
    = 0.
  \end{aligned}
\end{equation}
Then, putting 
\begin{equation}
  \label{eq:2.10}
  \begin{aligned}
    a &:= 
    \frac{4}{p_f-1} - N+2,
    \qquad
    b :=
    2N-4q_f 
    =2\left(N-2-\frac{2}{p_f-1}\right),
    \\
    L_1(\rho)
    & := 
    (2N-4q_f)\left[
      (f'(\phi)F(\phi) - q_f) - 
      \left( \frac{f(\phi)F(\phi)}{\phi} - \frac{1}{p_f-1} \right)
    \right] + I(\rho), 
    \\
    L_2(\rho)
    & := 
    4\left(
      \frac{f(\phi)F(\phi)}{\phi}-\frac{1}{p_f-1}
    \right), 
    \\ 
    N[\eta](\rho)
    & :=
    (2N-4q_f) \frac{F(\phi)}{\phi}\left(
      f(\phi(1+\eta)) - f(\phi) - f'(\phi)\phi\eta 
    \right), 
  \end{aligned}
\end{equation}
by \eqref{eq:2.9} we obtain 
\begin{equation}
  \label{eq:2.11}
  \eta''+a\eta'+b\eta+I(\rho) + L_1(\rho)\eta + L_2(\rho)\eta'+N[\eta](\rho) = 0. 
\end{equation}
In what follows we consider equation~\eqref{eq:2.11} 
and prove that \eqref{eq:2.11} admits infinitely many solutions satisfying $\eta(\rho) \to 0$ as $\rho\to\infty$, 
which turns out to be the multiplicity of singular solutions of \eqref{eq:1.1}. 

Let $p_*>1$ be the exponent defined by \eqref{eq:1.14}
and
$m:=2/(p_f-1)$.
Set 
\begin{equation}
  \label{eq:2.12}
  \begin{gathered}
    \lambda_1 := \frac{2m-N+2-\sqrt{(N-2-2m)^2-8(N-2-m)}}{2}, 
    \\ 
    \lambda_2 := \frac{2m-N+2+\sqrt{(N-2-2m)^2-8(N-2-m)}}{2}, 
  \end{gathered}
\end{equation}
for $p_f\in (p_\mathrm{c},p_{*})$ 
and  
\begin{equation}
  \label{eq:2.12a}
  \lambda_* := 
  \frac{2}{p_*-1}-\frac{N-2}{2} 
\end{equation}
for $p_f =p_*$.
We recall the following facts concerning the roots of equation~\eqref{eq:1.13}. 
When $p_\mathrm{c}<p_f<p_*$, 
equation~\eqref{eq:1.13} has two different 
negative roots $-\lambda_1$ and $-\lambda_2$. 
When $p_f=p_*$, equation~\eqref{eq:1.13} has the unique 
negative root $-\lambda_*$. 
When $p_*<p_f<p_\mathrm{S}$, equation~\eqref{eq:1.13} has two 
roots that are complex conjugate to each other, 
and they are given by $-\frac{a}{2} \pm ki$ for $k>0$. 
It should be mentioned that 
\[
\lambda_1=\lambda_2=\frac{a}{2}=\lambda_*
\]
if $p_f=p_*$.

Let $\Phi_1$ and $\Phi_2$ be the linearly independent solutions of 
\begin{equation}
  \label{eq:2.13}
  \eta''+a\eta'+b\eta=0, 
\end{equation}
defined by 
\begin{equation}
  \label{eq:2.14} 
  (\Phi_1(\rho),\Phi_2(\rho)) = 
  \begin{cases}
    (e^{-\lambda_1\rho}, e^{-\lambda_2\rho}) 
    & \mbox{if} \,\,\, p_\mathrm{c}<p<p_*, 
    \\
    (e^{-\lambda_*\rho}, \rho e^{-\lambda_*\rho})
    & \mbox{if} \,\,\, p=p_*, 
    \\
    (e^{-\frac{a}{2}\rho}\cos (k\rho), e^{-\frac{a}{2}\rho}\sin (k\rho)) 
    & \mbox{if} \,\,\, p_*<p<p_\mathrm{S}. 
  \end{cases}
\end{equation}
By variation of parameters one can rewrite equation \eqref{eq:2.11} as the following integral form:
\begin{equation}
  \label{eq:2.15}
  \begin{aligned}
    \eta(\rho) 
    &= 
    C_1\Phi_1 + C_2\Phi_2 - 
    \int_{\rho_0}^\rho 
    K(\rho,\tau)
    \{I(\tau) + L_1(\tau)\eta(\tau) + L_2(\tau)\eta'(\tau)+N[\eta](\tau)\}\, d\tau, 
  \end{aligned}
\end{equation}
where  $C_1,C_2$ is a constant and
\begin{equation}
  \notag 
  K(\rho,\tau) := 
  \frac{\Phi_1(\tau)\Phi_2(\rho)-\Phi_1(\rho)\Phi_2(\tau)}{
    W(\tau) 
  }, 
  \qquad 
  W(\tau) := \Phi_1(\tau)\Phi_2'(\tau) - \Phi_1'(\tau)\Phi_2(\tau). 
\end{equation}
Then we have 
\begin{equation}
  \notag 
  W(\tau) = 
  \begin{cases}
    -(\lambda_2-\lambda_1)e^{-(\lambda_1+\lambda_2)\tau} 
    & \mbox{if} \,\,\, p_\mathrm{c}<p_f<p_*, 
    \\
    e^{-2\lambda_*\tau}
    & \mbox{if} \,\,\, p_f=p_*, 
    \\
    ke^{-a\tau} 
    & \mbox{if} \,\,\, p_*<p_f<p_\mathrm{S}, 
  \end{cases}
\end{equation}
and obtain 
\begin{equation}
  \label{eq:2.16}
  K(\rho,\tau) = 
  \begin{cases}
    \dfrac{1}{\lambda_2-\lambda_1}
    \left(
      e^{-\lambda_1(\rho-\tau)}-e^{-\lambda_2(\rho-\tau)}
    \right)
    & \mbox{if} \,\,\, p_\mathrm{c}<p_f<p_*, 
    \\[7pt]
    (\rho-\tau) e^{-\lambda_*(\rho-\tau)}
    & \mbox{if} \,\,\, p_f=p_*, 
    \\[5pt] 
    \dfrac{1}{k}\sin (k(\rho-\tau)) e^{-\frac{a}{2}(\rho-\tau)}
    & \mbox{if} \,\,\, p_*<p_f<p_\mathrm{S}. 
  \end{cases}
\end{equation}
A direct computation shows that 
the derivative of $K$ with respect to $\rho$ is given by 
\begin{equation}
  \label{eq:2.17}
  \partial_\rho K(\rho,\tau) = 
  \begin{cases}
    \dfrac{1}{\lambda_2-\lambda_1}
    \left(
      -\lambda_1 e^{-\lambda_1(\rho-\tau)}+\lambda_2 e^{-\lambda_2(\rho-\tau)}
    \right)
    & \mbox{if} \,\,\, p_\mathrm{c}<p_f<p_*, 
    \\[7pt]
    (1-\lambda_* (\rho-\tau)) e^{-\lambda_*(\rho-\tau)}
    & \mbox{if} \,\,\, p_f=p_*, 
    \\[5pt] 
    \left(
      \cos (k(\rho-\tau))-\dfrac{a}{2k}\sin (k(\rho-\tau)) 
    \right) e^{-\frac{a}{2}(\rho-\tau)}
    & \mbox{if} \,\,\, p_*<p_f<p_\mathrm{S}. 
  \end{cases}
\end{equation}

\subsection{Useful lemmas}
We prepare several useful lemmas to solve the integral equation~\eqref{eq:2.15}.
\begin{lemma}
  \label{Lemma:2.1}
  Assume that limit $q_f$ given by \eqref{eq:1.12} exists and $q_f>1$, and let $p_f$ be the 
  H\"older conjugate of $q_f$.
  Let $\tilde u$ be the function defined by \eqref{eq:2.4} and put $\phi(\rho) = \tilde u(r)$ with $\rho=\log (1/r)$. 
  Then there hold the following$\mathrm{:}$ 
  \begin{itemize}
    \item[$\mathrm{(i)}$] 
    \begin{math}
      \displaystyle 
      \lim_{s\to \infty} \frac{f(s)F(s)}{s} = \frac{1}{p_f-1}.
    \end{math}
    In particular, $\displaystyle\lim_{\rho\to \infty} \frac{f(\phi(\rho))F(\phi(\rho))}{\phi(\rho)} = \frac{1}{p_f-1}$.
    \item[$\mathrm{(ii)}$]
    \begin{math}
      \displaystyle
      \lim_{\rho\to\infty} \frac{\phi'(\rho)}{\phi(\rho)} = \frac{2}{p_f-1}. 
    \end{math}
    \item[$\mathrm{(iii)}$]
    $\lim\limits_{\rho\to\infty} I(\rho) = 0$. 
    \item[\(\mathrm{(iv)}\)] 
    \(
      \displaystyle \lim_{\rho\to\infty} I'(\rho) = 0. 
    \)  
  \end{itemize}
\end{lemma}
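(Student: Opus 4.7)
The plan is to reduce each limit to the hypothesis on $q_f$ via L'H\^opital's rule and the fundamental identity $F'(s)=-1/f(s)$. Throughout, since $F$ is strictly decreasing with $F(\infty)=0$, the function $\phi(\rho)=F^{-1}(e^{-2\rho}/(2N-4q_f))$ tends to infinity as $\rho\to\infty$, which lets me transfer asymptotics in $s$ to asymptotics in $\rho$.

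For (i), I would apply L'H\^opital to the indeterminate form $f(s)F(s)/s$: the derivative of the numerator is $f'(s)F(s)+f(s)(-1/f(s))=f'(s)F(s)-1$, which tends to $q_f-1=1/(p_f-1)$ by the hypothesis and the H\"older conjugate relation. Composing with $\phi(\rho)$ gives the second statement of (i). For (ii), differentiating the defining identity $F(\phi(\rho))=e^{-2\rho}/(2N-4q_f)$ in $\rho$ produces $\phi'(\rho)=2f(\phi)F(\phi)$, and dividing by $\phi$ then applying (i) gives the claim. For (iii), writing $I(\rho)=(4f(\phi)F(\phi)/\phi)(f'(\phi)F(\phi)-q_f)$, the first factor converges to $4/(p_f-1)$ by (i) while the second factor converges to $0$ by the defining assumption on $q_f$.

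For (iv), set $A(\rho)=f(\phi)F(\phi)/\phi$ and $B(\rho)=f'(\phi)F(\phi)-q_f$, so $I=4AB$. I already know $A$ is bounded (it converges to $1/(p_f-1)$) and $B\to 0$, so it suffices to prove $A'\to 0$ and $B'\to 0$. A direct computation using $\phi'=2f(\phi)F(\phi)$ gives $A'=(\phi'/\phi)(B+q_f-1-A)$, whose second factor tends to $q_f-1-1/(p_f-1)=0$; hence $A'\to 0$. Similarly, $B'=2F(\phi)(f''(\phi)f(\phi)F(\phi)-f'(\phi))$. Here I would invoke the L'H\^opital form \eqref{eq:1.12} of $q_f$: writing $f''(\phi)f(\phi)=f'(\phi)^2(1+o(1))/q_f$ converts the parenthesis into $f'(\phi)(f'(\phi)F(\phi)(1+o(1))/q_f-1)$, whose last factor tends to $0$; combined with $F(\phi)f'(\phi)\to q_f$ (bounded), this yields $B'\to 0$.

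The main obstacle is (iv): the control of $B'$ is exactly where the stronger assumption \eqref{eq:1.12} is essential, since one needs asymptotic information on $f''$ to bound $B'$. I would need to verify carefully that the $o(1)$ produced by replacing $f''(\phi)f(\phi)$ by $f'(\phi)^2/q_f$ survives multiplication by the unbounded factor $f'(\phi)$; this works precisely because $f'(\phi)$ is paired with the small factor $f'(\phi)F(\phi)/q_f-1$ after cancellation.
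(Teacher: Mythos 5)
Your proposal is correct and follows essentially the same route as the paper: L'H\^opital for (i), the identity $\phi'=2f(\phi)F(\phi)$ for (ii)--(iii), and for (iv) your product-rule splitting $I=4AB$ with $A'\to 0$, $B'\to 0$ is exactly the paper's computation of $I'/4=A'B+AB'$, with your treatment of $B'$ matching the paper's key limit \eqref{eq:2.19} via \eqref{eq:1.7} and \eqref{eq:1.12}. The only cosmetic difference is that the paper first notes $f(s)F(s)\to\infty$ (from $(fF)'\to q_f-1>0$) before invoking L'H\^opital in (i), a point you may add for completeness.
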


\begin{proof}
  Since \(\{f(s)F(s)\}'=f'(s)F(s)-1\to q_f-1=1/(p_f-1)>0\) as \(s\to\infty\), 
  we have \(f(s)F(s)\to\infty\) as \(s\to\infty\). 
  Then, by L'H\^{o}pital rule we have  
  \begin{equation}
    \notag 
    \lim_{s\to\infty}\frac{f(s)F(s)}{s} = \lim_{s\to\infty} (f'(s)F(s)-1) = \frac{1}{p_f-1}. 
  \end{equation}
  Thus assertion~(i) follows. 
  Note that the latter of assertion~(i) follows from \(\phi(\rho)\to\infty\) as \(\rho\to\infty\).
  Since $\phi(\rho) = \tilde{u}(r)$ with $\rho=\log (1/r)$, 
  by \eqref{eq:2.6} we have
  \begin{equation}
    \label{eq:2.18}
    \phi'(\rho) = 2f(\phi(\rho))F(\phi(\rho)). 
  \end{equation} 
  Then we see that assertion~(ii) follows from assertion~(i). 
  Assertion~(iii) follows from \eqref{eq:1.12} and assertion~(i). 

  We prove the assertion~(iv). 
  Since it follows from \eqref{eq:2.8} that 
  \begin{equation}
    \notag 
    \frac{I'(\rho)}{4} =
    \frac{f'(\phi)F(\phi)-1-\frac{f(\phi)F(\phi)}{\phi}}{\phi}
    (f'(\phi)F(\phi) - q_f)\phi'
    + 
    \frac{f(\phi)F(\phi)}{\phi} \left(
      f''(\phi)F(\phi) - \frac{f'(\phi)}{f(\phi)}
    \right)\phi', 
  \end{equation}
  by \eqref{eq:1.7} and assertions~(i) and (ii) we see that it suffices to prove 
  \begin{equation}
    \label{eq:2.19}
    \lim_{\rho\to\infty}
    \left(
      f''(\phi)F(\phi) - \frac{f'(\phi)}{f(\phi)}
    \right) \phi'(\rho)
    = 0.
  \end{equation}
  By \eqref{eq:2.18} we obtain 
  \begin{equation}
    \notag 
    \left(
      f''(\phi)F(\phi) - \frac{f'(\phi)}{f(\phi)}
    \right) \phi'(\rho)
    = 
    2\left(
      \frac{f''(\phi)f(\phi)}{(f'(\phi))^2} (f'(\phi)F(\phi))^2 - f'(\phi)F(\phi)
    \right). 
  \end{equation}
  This together with \eqref{eq:1.7} and \eqref{eq:1.12} yields \eqref{eq:2.19}. 
  Thus assertion~(iv) follows. 
\end{proof}

\begin{lemma}\label{Lemma:2.2}
  Suppose that the hypotheses of Lemma~$\mathrm{\ref{Lemma:2.1}}$ hold.
  Then the following hold for any $\lambda>0\mathrm{:}$
  \begin{enumerate}[\rm(i)]
    \item
    $\displaystyle \lim_{\tau \to \infty}
    \int_{\rho_0}^{\tau}(1+(\tau-t))e^{-\lambda(\tau-t)}|I(t)|\, dt=0$.
    \item
    $\displaystyle 
    \lim_{\rho_0\to \infty}
    \left(
    \sup_{\tau \ge \rho_0}\int_{\rho_0}^{\tau}(1+(\tau-t))e^{-\lambda(\tau-t)}|I(t)|\, dt
    \right)
    =0$.
    \item
    $\displaystyle \lim_{\tau \to \infty}
    \int_{\rho_0}^{\tau}e^{-\lambda(\tau-t)}|I(t)|\, dt=0$.
    \item
    $\displaystyle 
    \lim_{\rho_0\to \infty}
    \left(
    \sup_{\tau \ge \rho_0}\int_{\rho_0}^{\tau}e^{-\lambda(\tau-t)}|I(t)|\,dt
    \right)
    =0$.
  \end{enumerate}
\end{lemma}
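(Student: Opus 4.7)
The plan is to reduce all four assertions to a single mechanism: Lemma~\ref{Lemma:2.1}(iii) gives $I(\rho)\to 0$ as $\rho\to\infty$, and since $I$ is continuous on $(0,\infty)$ this yields $\|I\|_{L^\infty([\rho_0,\infty))}<\infty$ for every $\rho_0>0$. The kernels appearing in the statements, $K_1(s):=(1+s)e^{-\lambda s}$ and $K_0(s):=e^{-\lambda s}$, are integrable on $[0,\infty)$ with explicit $L^1$-norms $\tfrac1\lambda+\tfrac1{\lambda^2}$ and $\tfrac1\lambda$ respectively. The four statements are then variants of the standard fact that convolution of a function tending to zero at infinity against an exponentially decaying kernel itself tends to zero.

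I would handle the uniform-in-$\tau$ assertions (ii) and (iv) first. Given $\varepsilon>0$, choose $\rho_\varepsilon$ with $|I(t)|<\varepsilon$ for $t\ge\rho_\varepsilon$; then for any $\rho_0\ge\rho_\varepsilon$ and any $\tau\ge\rho_0$, bounding the integrand by $\varepsilon K_j(\tau-t)$ and changing variables $s=\tau-t$ yields
\begin{equation*}
\sup_{\tau\ge\rho_0}\int_{\rho_0}^{\tau}K_j(\tau-t)\,|I(t)|\,dt \;\le\; \varepsilon\,\|K_j\|_{L^1([0,\infty))},
\end{equation*}
so sending $\rho_0\to\infty$ and then $\varepsilon\to 0$ proves (ii) and (iv).

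For the pointwise-in-$\tau$ assertions (i) and (iii), I would fix $\rho_0$ and $\varepsilon>0$, and pick $M\ge\rho_0$ with $|I(t)|<\varepsilon$ for $t\ge M$. Splitting $\int_{\rho_0}^{\tau}=\int_{\rho_0}^{M}+\int_{M}^{\tau}$, the tail integral is bounded by $\varepsilon\|K_j\|_{L^1}$ exactly as in the previous paragraph, uniformly in $\tau$. For the head, on $t\in[\rho_0,M]$ we have $\tau-t\ge\tau-M$, and the elementary pointwise bound $(1+s)e^{-\lambda s}\le C_\lambda e^{-\lambda s/2}$ reduces the kernel $K_1$ to the exponential scale; the head is therefore at most $C\,\|I\|_{L^\infty([\rho_0,M])}(M-\rho_0)\,e^{-\lambda(\tau-M)/2}$, which tends to $0$ as $\tau\to\infty$. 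Hence $\limsup_{\tau\to\infty}$ of the integral in (i) or (iii) is at most $\varepsilon\|K_j\|_{L^1}$, and letting $\varepsilon\to 0$ finishes the proof. I do not anticipate any serious obstacle: the only mild bookkeeping is the polynomial weight $1+(\tau-t)$ in (i) and (ii), handled either by the explicit $L^1$-norm computation or by the half-exponential domination $(1+s)e^{-\lambda s}\le C_\lambda e^{-\lambda s/2}$.
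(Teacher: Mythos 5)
Your proof is correct, and it takes a genuinely different route from the paper. The paper proves (i) by a case distinction on whether $\int_{\rho_0}^{\infty}e^{\lambda\tau}|I(\tau)|\,d\tau$ is finite, treating the divergent case with (two applications of) L'H\^opital's rule applied to $e^{-\lambda\tau}\int_{\rho_0}^{\tau}(1+(\tau-t))e^{\lambda t}|I(t)|\,dt$, and then deduces (ii) from (i) via an abstract observation: if $\int_{\rho_0}^{\tau}h(\tau,t)\,dt\to0$ as $\tau\to\infty$, then $\sup_{\tau\ge\rho_0}\int_{\rho_0}^{\tau}h(\tau,t)\,dt\to0$ as $\rho_0\to\infty$; (iii) and (iv) follow as special cases. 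You instead use only Lemma~\ref{Lemma:2.1}~(iii) together with the integrability of the kernels $K_j$, proving (ii) and (iv) directly by bounding $|I|$ by $\varepsilon$ past $\rho_\varepsilon$ and using $\|K_j\|_{L^1}$, and proving (i) and (iii) by the standard head/tail splitting at a level $M$, with the head killed by the exponential decay of the kernel (the bound $(1+s)e^{-\lambda s}\le C_\lambda e^{-\lambda s/2}$ handles the polynomial weight) and the tail bounded by $\varepsilon\|K_j\|_{L^1}$. Both arguments rest on the same input, $I(\rho)\to0$; yours is more elementary and uniform across the four assertions, avoiding the case split and L'H\^opital, while the paper's derivation of (ii) from (i) is slightly more general in that it needs no structure of the kernel beyond the pointwise limit in (i). The only point to state explicitly is that $I$ is (locally) bounded on $[\rho_0,M]$, which indeed follows from its continuity (it is even $C^1$, as used in Lemma~\ref{Lemma:2.1}~(iv)), so your head estimate is justified.
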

\begin{proof}[Proof of Lemma~$\mathrm{\ref{Lemma:2.2}}$]
  Assertion (i) is obvious if 
  $\int_{\rho_0}^{\infty}e^{\lambda \tau}|I(\tau)| \, d\tau <\infty$,
  so we consider the case 
  of 
  $\int_{\rho_0}^{\infty}e^{\lambda \tau}|I(\tau)| \, d\tau =\infty$.
  In this case,  
  L'H\^{o}pital's rule and Lemma~\ref{Lemma:2.1}~(iii) 
  show us that
  \[
  \begin{aligned}
    \lim_{\tau \to \infty}
    \frac{1}{e^{\lambda \tau}}\int_{\rho_0}^{\tau}(1+(\tau-t))e^{\lambda t}|I(t)|\,dt
    & =
    \lim_{\tau \to \infty}
    \frac{1}{\lambda e^{\lambda \tau}}\left(
      e^{\lambda \tau}|I(\tau)|+\int_{\rho_0}^{\tau}e^{\lambda t}|I(t)|\,dt
    \right)
    \\ 
    & = 
    \lim_{\tau\to \infty} \frac{1}{\lambda}|I(\tau)| + 
    \lim_{\tau\to \infty} \frac{e^{\lambda\tau} |I(\tau)|}{\lambda^2 e^{\lambda \tau}}
    = 0.
  \end{aligned}
  \]
  This completes the proof of (i). 

  More generally, we show that the following property is true, thereby proving assertion~(ii): 
  if a non-negative function $h$ satisfies
  \begin{equation}
    \notag 
    \lim_{\tau\to \infty}\int_{\rho_0}^\tau h(\tau,t)dt= 0, 
  \end{equation}
  then
  \[
    \lim_{\rho_0\to \infty}
    \left(
    \sup_{\tau \ge \rho_0}\int_{\rho_0}^{\tau}
    h(\tau,t)\, dt
    \right)
    =0.
  \]
  Fix arbitrary $\epsilon>0$. Then there exists $\tau_0>\rho_0$ such that 
  $\displaystyle\int_{\rho_0}^{\tau}h(\tau,t)\, dt<\epsilon$ for all $\tau>\tau_0$. 
  Let $\tau_1\ge \tau_0$. It follows from $(\tau_1,\tau)\subset (\tau_0,\tau)$ that
  \[
  \int_{\tau_1}^{\tau}h(\tau,t)\, dt 
  \le 
  \int_{\tau_0}^{\tau}h(\tau,t)\, dt 
  < \epsilon
  \]
  for all $\tau\ge \tau_1$.
  Therefore, it holds
  \[
  \sup_{\tau\ge \tau_1}
  \int_{\tau_1}^{\tau}h(\tau,t)\, dt 
  \le \epsilon.
  \]
  This implies that assertion~(ii) is true.
  Assertions (iii) and (iv) follows from assertions (i) and (ii), respectively.
\end{proof}

We prepare estimates for the nonlinear term in \eqref{eq:2.15},
which will be used to apply the fixed point theorem 
to find a solution of \eqref{eq:2.15}.

\begin{lemma}[Nonlinear estimate]
  \label{Lemma:2.3}
  Suppose that the hypotheses of Lemma~$\mathrm{\ref{Lemma:2.1}}$ hold.
  Then there exist constants $\rho_0>0$ and $C>0$ such that 
  \begin{equation}
    \label{eq:2.22}
    |N[\eta_1](\rho)-N[\eta_2](\rho)| 
    \le 
    C (|\eta_1(\rho)|+|\eta_2(\rho)|) |\eta_1(\rho)-\eta_2(\rho)| 
  \end{equation}
  for all $\rho\ge \rho_0$ and any $\eta_i$ $(i=1,2)$ satisfying $\eta_i(\rho)\to 0$ as $\rho\to\infty$. 
  Here $C$ depends only on $N$ and $q_f$.
\end{lemma}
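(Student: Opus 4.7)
The plan is to reduce \eqref{eq:2.22} to a uniform pointwise bound on $F(\phi)\phi\,|f''(\xi)|$ when $\xi$ lies in a bounded multiplicative neighbourhood of $\phi(\rho)$, and then extract that bound from the asymptotic identities supplied by \eqref{eq:1.7}, \eqref{eq:1.12}, and Lemma~\ref{Lemma:2.1}(i). Because $\eta_i(\rho)\to 0$, I first fix $\rho_0$ so large that $|\eta_i(\rho)|\le 1/2$ for $\rho\ge\rho_0$; every argument of $f$, $f'$, $f''$ that I touch will then lie in $[\phi/2,3\phi/2]$ with $\phi=\phi(\rho)$ large. Starting from the definition \eqref{eq:2.10} of $N[\eta]$, the identity
\begin{equation*}
f(\phi(1+\eta_1))-f(\phi(1+\eta_2))-f'(\phi)\phi(\eta_1-\eta_2)
=\int_{\eta_2}^{\eta_1}\int_0^1 f''(\phi(1+ts))\,\phi^2 s\,dt\,ds,
\end{equation*}
together with $|s|\le\max(|\eta_1|,|\eta_2|)\le|\eta_1|+|\eta_2|$, immediately yields
\begin{equation*}
|N[\eta_1](\rho)-N[\eta_2](\rho)|
\le C_N\,F(\phi)\phi\,\sup_{\xi\in[\phi/2,3\phi/2]}|f''(\xi)|\,(|\eta_1|+|\eta_2|)\,|\eta_1-\eta_2|,
\end{equation*}
with $C_N$ depending only on $N$ and $q_f$ through the prefactor $2N-4q_f$.

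The next task is to bound $F(\phi)\phi\,|f''(\xi)|$ uniformly in $\phi$ large and $\xi\in[\phi/2,3\phi/2]$. Lemma~\ref{Lemma:2.1}(i) already gives $F(\phi)\simeq\phi/f(\phi)$ as $\phi\to\infty$. Combining this with the L'H\^opital form \eqref{eq:1.7} yields $f'(\phi)\phi/f(\phi)\to p_f$, so $f'(\phi)\simeq f(\phi)/\phi$. Substituting into \eqref{eq:1.12} then gives $f''(\phi)\phi^2/f(\phi)\to p_f^2/q_f$, a constant depending only on $q_f$, hence $f''(\phi)\simeq f(\phi)/\phi^2$. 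Plugging these three asymptotic equivalences back together produces $F(\phi)\phi\,|f''(\phi)|\simeq(\phi/f(\phi))\cdot\phi\cdot(f(\phi)/\phi^2)=1$, with an implied constant depending only on $N$ and $q_f$.

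What remains is to replace $f''(\phi)$ by $f''(\xi)$ with $\xi\in[\phi/2,3\phi/2]$. For this I integrate $f'(u)/f(u)\simeq 1/u$ from $\phi$ to $\xi$ to obtain $\log f(\xi)-\log f(\phi)=O(1)$ uniformly in $\phi$ large, hence $f(\xi)\simeq f(\phi)$; since $\xi^2\simeq\phi^2$ trivially, applying the bound of the previous paragraph at $\xi$ gives $|f''(\xi)|\lesssim f(\phi)/\phi^2$, which combines with $F(\phi)\simeq\phi/f(\phi)$ to close the estimate and prove \eqref{eq:2.22}. The \emph{main obstacle} is exactly this last comparison step: converting the qualitative limits in Lemma~\ref{Lemma:2.1} and \eqref{eq:1.12} into genuine two-sided inequalities $c_1f(\phi)\le f(\xi)\le c_2f(\phi)$ on $\xi\in[\phi/2,3\phi/2]$ with constants $c_1,c_2$ depending only on $q_f$, which requires enlarging $\rho_0$ so that the $o(1)$'s in the various asymptotic identities are absorbed uniformly; the remaining manipulations are routine Taylor's theorem and the already-established limits.
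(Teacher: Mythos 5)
Your proof is correct and follows the same overall strategy as the paper: expand the nonlinear remainder to second order so that a factor $f''$ evaluated near $\phi$ appears multiplied by the quadratic expression $(|\eta_1|+|\eta_2|)|\eta_1-\eta_2|$, and then show that $\frac{F(\phi)}{\phi}\,\phi^2\,|f''(\cdot)|$ stays bounded by a constant depending only on $q_f$, using \eqref{eq:1.7}, \eqref{eq:1.12} and Lemma~\ref{Lemma:2.1}(i). Where you diverge is in how the intermediate argument is compared with $\phi$: the paper applies the mean value theorem twice to produce a single intermediate point $\phi_{c,d}=\phi(1+o(1))$, evaluates every limit at $\phi_{c,d}$, and handles the mismatch through the auxiliary ratio estimate \eqref{eq:2.23}, namely $F(\phi)/F(\phi_{c,d})\to 1$; you instead use the integral form of the Taylor remainder, take a supremum of $|f''|$ over the whole interval $[\phi/2,3\phi/2]$, and control it by proving $f(\xi)\simeq f(\phi)$ there via integrating $f'(u)/f(u)\lesssim 1/u$ (plus monotonicity of $f$ for $\xi\le\phi$), which replaces \eqref{eq:2.23} by a quantitative two-sided comparison of $f$ on a fixed multiplicative neighbourhood. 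Your variant is slightly more robust — the bound is uniform on the interval rather than tied to the specific intermediate point, which makes the dependence on $\eta_1,\eta_2$ more transparent — at the cost of a little extra bookkeeping; the paper's version is shorter because it only needs the limits at one point. The uniformity caveat you flag (the choice of $\rho_0$ so that $|\eta_i|\le 1/2$ formally depends on $\eta_i$ when only $\eta_i(\rho)\to 0$ is assumed) is present in the paper's argument as well, and is harmless in the application, where $\eta\in X_{\rho_0}$ carries a uniform smallness.
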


\begin{proof}
  We first prove 
  that
  \begin{equation}\label{eq:2.23}
  \lim_{\rho\to \infty}\frac{F(\phi+\eta\phi)}{F(\phi)}=1
  \end{equation}
  for any $\eta=\eta(\rho)$ satisfying $\eta(\rho)\to 0$ as $\rho\to\infty$. 
  Indeed, by the mean value theorem we have
  \begin{equation}\label{eq:2.24}
  F(\phi+\phi|\eta|)=F(\phi)-\frac{1}{f(\phi+c_1\phi|\eta|)}\phi|\eta|\ge F(\phi)-\frac{\phi|\eta|}{f(\phi)},
  \end{equation}
  \begin{equation}\label{eq:2.25}
  F(\phi-\phi|\eta|)=F(\phi)+\frac{1}{f(\phi-c_2\phi|\eta|)}\phi|\eta|\le F(\phi)+\frac{\phi|\eta|}{f(\phi-\phi|\eta|)},
  \end{equation}
  for some $c_1$, $c_2\in (0,1)$.
  By \eqref{eq:2.24} and the monotonicity of $F$, it holds that
  \[
  1\ge \frac{F(\phi+\phi|\eta|)}{F(\phi)} 
  \ge 1-\frac{1}{f(\phi)F(\phi)}\phi|\eta|.
  \]
  Since $\eta(\rho) \to 0$ as $\rho\to \infty$, we obtain
  $\lim\limits_{\rho\to \infty}\dfrac{F(\phi+\phi|\eta|)}{F(\phi)}=1$.
  Similarly, by \eqref{eq:2.25} and the monotonicity of $F$ we see that
  \[
  1\ge \frac{F(\phi)}{F(\phi-\phi|\eta|)}\ge 1-\frac{\phi-\phi|\eta|}{f(\phi-\phi|\eta|)F(\phi-\phi|\eta|)}\frac{|\eta|}{1-|\eta|}.
  \]
  Letting $\rho \to \infty$ yields 
  $\lim\limits_{\rho\to \infty}\dfrac{F(\phi)}{F(\phi-\phi|\eta|)}=1$. 
  These together with the fact that 
  \[
  \frac{F(\phi+\phi|\eta|)}{F(\phi)}\le \frac{F(\phi+\phi\eta)}{F(\phi)}\le \frac{F(\phi-\phi|\eta|)}{F(\phi)}
  \]
  implies \eqref{eq:2.23}.

  Let us prove Lemma~\ref{Lemma:2.3} with the aid of \eqref{eq:2.23}.
  The mean value theorem implies that there exist $c,d\in (0,1)$ such that
  \[
  \begin{aligned}
&
  N[\eta_1]-N[\eta_2]
  \\
  &
  =
  (2N-4q_f)\frac{F(\phi)}{\phi}
  [f(\phi+\phi\eta_1)-f(\phi+\phi\eta_2)-f'(\phi)(\eta_1-\eta_2)]
  \\
  &
  =
  (2N-4q_f)\frac{F(\phi)}{\phi}
  [f'(\phi+(c\eta_1+(1-c)\eta_2)\phi)-f'(\phi)]\phi(\eta_1-\eta_2)
  \\
  &
  =
  (2N-4q_f)
  \frac{F(\phi)}{\phi}
  f''(\phi+d(c\eta_1+(1-c)\eta_2)\phi)\phi^2(\eta_1-\eta_2)(c\eta_1+(1-c)\eta_2).
  \end{aligned}
  \]
  Setting
  \(
  \phi_{c,d}:=\phi+d(c\eta_1+(1-c)\eta_2)\phi, 
  \)
  we have 
  \[
  |N[\eta_1]-N[\eta_2]|
  \le C_0 
  \frac{F(\phi)}{\phi}
  |f''(\phi_{c,d})|
  \phi^2(|\eta_1|+|\eta_2|)|\eta_1-\eta_2|, 
  \]
  where $C_0>0$ is a constant. 
  Since ${\phi_{c,d}}/{\phi}\to 1$ as $\rho \to \infty$, 
  it follows from 
  \eqref{eq:1.12}, \eqref{eq:2.23} and Lemma~\ref{Lemma:2.1}~(i) that
  \[
  \begin{aligned}
  \frac{F(\phi)}{\phi}
  |f''(\phi_{c,d})|
  \phi^2
  &
  =
  \frac{F(\phi)}{F(\phi_{c,d})}\frac{\phi_{c,d}}{f(\phi_{c,d})F(\phi_{c,d})}\frac{\phi}{\phi_{c,d}}
\left|
\frac{f''(\phi_{c,d})f(\phi_{c,d})}{f'(\phi_{c,d})^2}
\right|
(f'(\phi_{c,d})F(\phi_{c,d}))^2
  \to 
  p_f
 \end{aligned}
  \]
  as $\rho \to \infty$, 
  yielding \eqref{eq:2.22}. 
\end{proof}

\begin{lemma}\label{Lemma:2.4}
  Suppose that the hypotheses of Lemma~$\mathrm{\ref{Lemma:2.1}}$ hold.
  Let $\lambda\in (0,2/(p_f-1))$. 
  Then there exist constants $\rho_0>0$ and $C>0$ such that 
  \begin{equation}\label{eq:2.26}
    \begin{aligned}
      &
      \int_{\rho_0}^{\rho}e^{\lambda\tau}\left|
        \frac{f(\phi(\tau))F(\phi(\tau))}{\phi(\tau)}-\frac{1}{p_f-1}
      \right| d\tau
      \\
      &
      \le 
      C
      \left(
        \int_{\rho_0}^{\rho}e^{\lambda\tau}|f'(\phi(\tau))F(\phi(\tau))-q_f|\, d\tau 
        +
        e^{\lambda\rho_0}\left|\frac{f(\phi(\rho_0))F(\phi(\rho_0))}{\phi(\rho_0)}-\frac{1}{p_f-1}\right|
      \right)
    \end{aligned}
  \end{equation}
  and
  \begin{equation}\label{eq:2.27}
    \begin{aligned}
      &
      \int_{\rho_0}^{\rho}
      (1+(\rho-\tau))
      e^{\lambda\tau}\left|\frac{f(\phi(\tau))F(\phi(\tau))}{\phi(\tau)}-\frac{1}{p_f-1}\right| d\tau
      \\
      &
      \qquad
      \le 
      C
      \biggl(
        \int_{\rho_0}^{\rho}
        (1+(\rho-\tau))
        e^{\lambda \tau}|f'(\phi(\tau))F(\phi(\tau))-q_f|\, d\tau 
        \\
        &\qquad \qquad
        +
        (1+(\rho-\rho_0))
        e^{\lambda\rho_0}\left|\frac{f(\phi(\rho_0))F(\phi(\rho_0))}{\phi(\rho_0)}-\frac{1}{p_f-1}\right|
      \biggr)
    \end{aligned}
  \end{equation}
  for all $\rho\ge \rho_0$.
  Here $C$ depends only on $N$ and $q_f$.
\end{lemma}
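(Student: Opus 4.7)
The plan is to reduce both inequalities to a linear first-order ODE satisfied by the quantity appearing on the left. Writing $A(\rho) := \frac{f(\phi(\rho))F(\phi(\rho))}{\phi(\rho)} - \frac{1}{p_f-1}$ and $B(\rho) := f'(\phi(\rho))F(\phi(\rho)) - q_f$, a direct computation using $\{f(s)F(s)\}' = f'(s)F(s) - 1$ gives
\[
 s \cdot \frac{d}{ds}\!\left(\frac{f(s)F(s)}{s} - \frac{1}{p_f-1}\right) + \left(\frac{f(s)F(s)}{s} - \frac{1}{p_f-1}\right) = f'(s)F(s) - q_f,
\]
where I used $1/(p_f-1) = q_f - 1$. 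Composing with $\phi(\rho)$ and using $\phi'(\rho) = 2f(\phi)F(\phi)$ from \eqref{eq:2.18} yields the ODE
\[
 A'(\rho) + c(\rho)\, A(\rho) = c(\rho)\, B(\rho), \qquad c(\rho) := \frac{\phi'(\rho)}{\phi(\rho)}.
\]
By Lemma~\ref{Lemma:2.1}\,(ii) we have $c(\rho) \to \mu := 2/(p_f-1) > \lambda$ as $\rho\to\infty$, so fixing $\epsilon>0$ with $\mu - \epsilon > \lambda$ and taking $\rho_0$ large enough that $\mu-\epsilon \le c(\rho) \le \mu+\epsilon$ on $[\rho_0,\infty)$, variation of parameters and monotone bounds on the integrating factor give the pointwise estimate
\[
 |A(\rho)| \le e^{-(\mu-\epsilon)(\rho-\rho_0)} |A(\rho_0)| + (\mu+\epsilon) \int_{\rho_0}^{\rho} e^{-(\mu-\epsilon)(\rho - s)} |B(s)|\, ds.
\]

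Next, to prove \eqref{eq:2.26} I would multiply this pointwise bound by $e^{\lambda\tau}$ (with $\tau$ in place of $\rho$) and integrate over $\tau\in[\rho_0,\rho]$. The boundary term gives $|A(\rho_0)| e^{(\mu-\epsilon)\rho_0} \int_{\rho_0}^{\rho} e^{-\nu\tau}\, d\tau$ with $\nu := (\mu-\epsilon)-\lambda > 0$, which collapses to a constant multiple of $e^{\lambda\rho_0} |A(\rho_0)|$. For the convolution term I would apply Fubini to exchange the two integrals; the resulting inner integral $\int_s^\rho e^{\lambda\tau} e^{-(\mu-\epsilon)(\tau - s)}\, d\tau$ is bounded by $\nu^{-1} e^{\lambda s}$, giving exactly $\int_{\rho_0}^{\rho} e^{\lambda s} |B(s)|\, ds$ up to a constant depending only on $N$ and $q_f$ (through $\mu$ and $\epsilon$, which were chosen from $q_f$).

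For \eqref{eq:2.27} the same strategy applies, only with the extra weight $(1+(\rho-\tau))$ in front of $e^{\lambda\tau}$. After Fubini, the boundary term reduces to estimating $\int_{\rho_0}^{\rho}(1+(\rho-\tau)) e^{-\nu\tau}\, d\tau$, and the convolution term to the analogous inner integral $\int_{s}^{\rho}(1+(\rho-\tau)) e^{-\nu(\tau - s) + \lambda s}\, d\tau$. Both can be computed explicitly via the substitution $w = \rho - \tau$ (or $w = \tau - s$); the $(1+w) e^{-\nu w}$ factor integrates to a polynomial of degree one in the length of the interval, producing the $(1+(\rho-\rho_0))$ factor in the boundary term and $(1+(\rho-s))$ inside the integral against $|B(s)|$, matching the right-hand side of \eqref{eq:2.27}.

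The main technical point—really the only one requiring attention—is ensuring that the constants in the weighted computation are independent of $\rho$ and depend only on $N$ and $q_f$; this is why fixing $\epsilon$ small enough that $\nu = (\mu-\epsilon)-\lambda > 0$ is uniform in $\rho_0$ is essential. Everything else is an application of the ODE structure and Fubini's theorem.
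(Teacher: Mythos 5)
Your proposal is correct. The identity you start from is, after multiplying through by $\phi$, exactly the one the paper uses: your ODE $A'+c\,A=c\,B$ with $c=\phi'/\phi$ is the differential form of $\bigl(\phi A\bigr)'=\phi'B$, i.e.\ of $\{f(s)F(s)\}'=f'(s)F(s)-1$ combined with $q_f=1+\tfrac{1}{p_f-1}$, which the paper exploits in integrated (fundamental-theorem-of-calculus) form in \eqref{eq:2.28}. Where you diverge is in the subsequent handling: you solve the linear ODE by variation of parameters, use the two-sided bound $\mu-\epsilon\le c(\rho)\le\mu+\epsilon$ from Lemma~\ref{Lemma:2.1}~(ii) to get a pointwise convolution bound on $|A(\rho)|$, and then integrate against the weights $e^{\lambda\tau}$ and $(1+(\rho-\tau))e^{\lambda\tau}$ via Fubini; the paper instead never produces a pointwise bound, but multiplies the integrated inequality by $e^{\lambda\tau}/\phi(\tau)$, observes that $e^{\lambda\tau}/\phi(\tau)\lesssim -\bigl(e^{\lambda\tau}/\phi(\tau)\bigr)'$ precisely because $\lambda<2/(p_f-1)$, and concludes by integration by parts. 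Both arguments rest on the same two ingredients (the identity and Lemma~\ref{Lemma:2.1}~(ii)); yours yields as a by-product a pointwise decay estimate for $A$, while the paper's avoids solving the ODE and works directly at the level of the weighted integrals. One small caveat applying equally to both proofs: the constant really depends on the gap $2/(p_f-1)-\lambda$ (your $\nu$, the paper's implicit constant in \eqref{eq:2.31}), which is consistent with the stated dependence on $N$ and $q_f$ only because the $\lambda$'s used in the application ($\lambda_1$, $\lambda_*$, $a/2$) are themselves determined by $N$ and $q_f$; it would be worth saying this explicitly when you fix $\epsilon$.
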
 

\begin{proof}[Proof of Lemma~$\mathrm{\ref{Lemma:2.4}}$]
  Let $\rho_0>0$ be a sufficiently large number to be chosen later. 
  It follows from the fundamental theorem of calculus that
  \begin{equation}\label{eq:2.28}
  \begin{aligned}
  &
  \left|f(\phi(\tau))F(\phi(\tau))-\frac{\phi(\tau)}{p_f-1}\right|
  \\
  &
  =
  \left|\int_{\rho_0}^{\tau}\left(f'(\phi(t))F(\phi(t))-1-\frac{1}{p_f-1}\right)\phi'(t)\, dt
  +
  f(\phi(\rho_0))F(\phi(\rho_0))-\frac{\phi(\rho_0)}{p_f-1}
  \right|
  \\
  &
  \le
  \int_{\rho_0}^{\tau}|f'(\phi(t))F(\phi(t))-q_f|
  |\phi'(t)|\, dt
  +
  C_{\rho_0},
  \end{aligned}
  \end{equation}
  where 
  \begin{equation}
    \label{eq:2.29}
    C_{\rho_0}:=
    \left|
    f(\phi(\rho_0))F(\phi(\rho_0))-\frac{\phi(\rho_0)}{p_f-1}
    \right|.
  \end{equation}
  Multiplying both sides of the above inequality by $e^{\lambda\tau}/\phi(\tau)$ 
  and integrating over $[\rho_0,\rho]$,
  we have
  \begin{equation}
    \label{eq:2.30}
    \begin{aligned}
      &
      \int_{\rho_0}^{\rho}e^{\lambda\tau}\left|\frac{f(\phi(\tau))F(\phi(\tau))}{\phi(\tau)}-\frac{1}{p-1}\right|d\tau
      \\
      &
      \le
      \int_{\rho_0}^{\rho}
      \frac{e^{\lambda \tau}}{\phi(\tau)}
      \left(
        \int_{\rho_0}^{\tau}|f'(\phi(t))F(\phi(t))-q||\phi'(t)|\, dt 
      \right) d\tau
      +
      \int_{\rho_0}^{\rho}
      C_{\rho_0} \frac{e^{\lambda \tau}}{\phi(\tau)}
      d\tau
    \end{aligned}
  \end{equation}
  for all $\rho\ge \rho_0$. 
  Since
  \[
  \left(\frac{e^{\lambda\tau}}{\phi(\tau)}\right)'
  =\left(\lambda-\frac{\phi'(\tau)}{\phi(\tau)}\right)\frac{e^{\lambda \tau}}{\phi(\tau)},
  \]
  by Lemma~\ref{Lemma:2.1}~(ii) and $\lambda<\frac{2}{p_f-1}$, it holds
  \begin{equation}\label{eq:2.31}
  \frac{e^{\lambda \tau}}{\phi(\tau)}
  \lesssim
  -\left(\frac{e^{\lambda\tau}}{\phi(\tau)}\right)'
  \qquad 
  \text{and}
  \qquad 
  \frac{|\phi'(\tau)|}{\phi(\tau)}\simeq 1
  \end{equation}
  for sufficiently large $\tau$.
  Hence we have
  \[
    \begin{aligned}
      &
      \int_{\rho_0}^{\rho}
      \frac{e^{\lambda \tau}}{\phi(\tau)}
      \left(
      \int_{\rho_0}^{\tau}|f'(\phi(t))F(\phi(t))-q_f||\phi'(t)|\, dt
      \right) d\tau
      \\
      &
      \lesssim
      -
      \int_{\rho_0}^{\rho}
      \left(\frac{e^{\lambda \tau}}{\phi(\tau)}\right)'
      \left( 
      \int_{\rho_0}^{\tau}|f'(\phi(t))F(\phi(t))-q_f||\phi'(t)|\, dt
      \right) d\tau
      \\
      &
      =
      -
      \frac{e^{\lambda \rho}}{\phi(\rho)}
      \int_{\rho_0}^{\rho}|f'(\phi(t))F(\phi(t))-q_f||\phi'(t)|\, dt
      +
      \int_{\rho_0}^{\rho}
      \frac{e^{\lambda \tau}}{\phi(\tau)}
      |f'(\phi(\tau))F(\phi(\tau))-q_f||\phi'(\tau)|\, d\tau
      \\
      &
      \lesssim
      \int_{\rho_0}^{\rho}
      e^{\lambda \tau}
      |f'(\phi(\tau))F(\phi(\tau))-q_f|\, d\tau,
    \end{aligned}
  \]
  if $\rho_0$ is sufficiently large. 
  In the final inequality we applied Lemma~\ref{Lemma:2.1} (ii) again.
  Meanwhile, 
  \begin{equation}
    \label{eq:2.32}
    \begin{aligned}
      \int_{\rho_0}^{\rho}
      C_{\rho_0} \frac{e^{\lambda \tau}}{\phi(\tau)}\, 
      d\tau
      &
      \lesssim
      -
      \int_{\rho_0}^{\rho}
      C_{\rho_0} \left(\frac{e^{\lambda \tau}}{\phi(\tau)}\right)'
      d\tau
      \lesssim
      C_{\rho_0}\frac{e^{\lambda \rho_0}}{\phi(\rho_0)}
    \end{aligned}
  \end{equation}
  which together with \eqref{eq:2.29} and \eqref{eq:2.30} yields \eqref{eq:2.26}. 

  The proof of \eqref{eq:2.27} is the same as above, using integration by parts. 
  Multiplying both sides of inequality~\eqref{eq:2.28} by $(1+(\rho-\tau))e^{\lambda\tau}/\phi(\tau)$ 
  and integrating over $[\rho_0,\rho]$, we have
  \begin{equation}
    \label{eq:2.33}
    \begin{aligned}
      &
      \int_{\rho_0}^{\rho}
      (1+(\rho-\tau))
      e^{\lambda\tau}\left|\frac{f(\phi(\tau))F(\phi(\tau))}{\phi(\tau)}-\frac{1}{p_f-1}\right|d\tau
      \\
      &
      \le
      \int_{\rho_0}^{\rho}
      (1+(\rho-\tau))
      \frac{e^{\lambda \tau}}{\phi(\tau)}
      \left(
      \int_{\rho_0}^{\tau}|f'(\phi(t))F(\phi(t))-q||\phi'(t)|\,dt
      \right) d\tau
      \\
      &
      \qquad 
      +
      C_{\rho_0} \int_{\rho_0}^{\rho}
      (1+(\rho-\tau))
      \frac{e^{\lambda \tau}}{\phi(\tau)}\, 
      d\tau.
    \end{aligned}
  \end{equation}
  It follows from \eqref{eq:2.28}, \eqref{eq:2.31} and integration by parts that
  \[
    \begin{aligned}
      &
      \int_{\rho_0}^{\rho}
      (1+(\rho-\tau))
      \frac{e^{\lambda \tau}}{\phi(\tau)}
      \left( 
        \int_{\rho_0}^{\tau}|f'(\phi(t))F(\phi(t))-q_f||\phi'(t)|\, dt
      \right) d\tau
      \\
      &
      \lesssim
      -
      \int_{\rho_0}^{\rho}
      \left(\frac{e^{\lambda \tau}}{\phi(\tau)}\right)'
      (1+(\rho-\tau))
      \left(
        \int_{\rho_0}^{\tau}|f'(\phi(t))F(\phi(t))-q_f||\phi'(t)|\, dt
      \right)d\tau
      \\
      &
      =
      -
      \frac{e^{\lambda \rho}}{\phi(\rho)}
      \int_{\rho_0}^{\rho}|f'(\phi(t))F(\phi(t))-q_f||\phi'(t)|\,dt
      \\
      &
      \qquad
      -
      \int_{\rho_0}^{\rho}
      \frac{e^{\lambda \tau}}{\phi(\tau)}
      \left(
        \int_{\rho_0}^{\tau}
        |f'(\phi(t))F(\phi(t))-q_f||\phi'(t)|\, dt
      \right)
      d\tau
      \\
      &
      \qquad
      +
      \int_{\rho_0}^{\rho}
      (1+(\rho-\tau))
      \frac{e^{\lambda \tau}}{\phi(\tau)}
      |f'(\phi(\tau))F(\phi(\tau))-q_f||\phi'(\tau)|\, d\tau
      \\
      &
      \lesssim
      \int_{\rho_0}^{\rho}
      (1+(\rho-\tau))
      e^{\lambda \tau}
      |f'(\phi(\tau))F(\phi(\tau))-q_f|\, d\tau.
    \end{aligned}
  \]
  Meanwhile, it follows from \eqref{eq:2.32} that 
  \[
    \begin{aligned}
      \int_{\rho_0}^{\rho}
      C_{\rho_0} 
      (1+(\rho-\tau))
      \frac{e^{\lambda \tau}}{\phi(\tau)}\, 
      d\tau
      &
      \le 
      C_{\rho_0}
      (1+(\rho-\rho_0))
      \int_{\rho_0}^{\rho}
      \frac{e^{\lambda \tau}}{\phi(\tau)}\, 
      d\tau
      \lesssim 
      C_{\rho_0}
      (1+(\rho-\rho_0))
      \frac{e^{\lambda \rho_0}}{\phi(\rho_0)}, 
    \end{aligned}
  \]
  which together with \eqref{eq:2.29} and \eqref{eq:2.33} yields \eqref{eq:2.27}.
\end{proof}

\section{Proof of Theorem~\ref{Theorem:1.1}}
\label{section:3}

We prove Theorem~\ref{Theorem:1.1} in this section. 
Let us recall the symbols used in Section~\ref{section:2}. 
$\lambda_1$, $\lambda_2$ and $\lambda_*$ are constants defined by \eqref{eq:2.12} and \eqref{eq:2.12a}. 
Let $\rho_0>0$. 
Defining 
\[
  Q(\rho,\tau):=
  \left\{
    \begin{aligned}
      &
      e^{-\lambda_1(\rho-\tau)} && \text{if}\ \ p_\mathrm{c}<p_f<p_*, 
      \\
      &
      (1+(\rho-\tau))e^{-\lambda_*(\rho-\tau)} && \text{if}\ \ p_f=p_*, 
      \\
      &
      e^{-\frac{a}{2}(\rho-\tau)} && \text{if}\ \ p_*<p_f<p_\mathrm{S}, 
    \end{aligned}
  \right.
\]
we see that 
\[
  Q\left(\log\frac{1}{r},\log\frac{1}{s}\right)=P(r,s),
\]
where $P(r,s)$ is defined by \eqref{eq:1.15}. 
Clearly it follows from \eqref{eq:2.16} and \eqref{eq:2.17} that
\begin{equation}
  \label{eq:3.1}
  |K(\rho,\tau)|+\left|\frac{\partial}{\partial \rho}K(\rho,\tau)\right|
  \lesssim  Q(\rho,\tau)
\end{equation}
for all $\rho$, $\tau\ge \rho_0$.  
We redraw Theorem~\ref{Theorem:1.1} using the variables after the Emden transformation, 
and prove the following theorem. 
In particular, we show that 
there exists a solution of equation~\eqref{eq:2.15} 
using the fixed point theorem. 
\begin{theorem}
  \label{Theorem:3.1}
  Assume $p_\mathrm{c}<p_f<p_\mathrm{S}$.
  Then there exist $\rho_0,\epsilon>0$ 
  such that
  for all $\alpha,\beta>0$ with $\alpha+\beta<\epsilon$,
  equation~\eqref{eq:2.15} possesses 
  a solution 
  $\eta\in C^2((\rho_0,\infty))\cap C^1([\rho_0,\infty))$  
  satisfying
  $\eta(\rho_0)=\alpha$, 
  $\eta'(\rho_0)=\beta$ and
  \[
  |\eta(\rho)|+|\eta'(\rho)|
  =O
  \left(
    Q(\rho,\rho_0) + 
    \int_{\rho_0}^{\rho}Q(\rho,\tau)|I(\tau)|\, d\tau
  \right)
  \]
  as $\rho \to \infty$.
\end{theorem}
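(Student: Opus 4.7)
The plan is to reformulate the integral equation \eqref{eq:2.15} as a fixed-point problem $\eta = T[\eta]$ in a weighted Banach space and apply Banach's contraction mapping principle. Introduce the weight $M(\rho) := Q(\rho,\rho_0) + \int_{\rho_0}^\rho Q(\rho,\tau)|I(\tau)|\,d\tau$ and the Banach space $X := \{\eta \in C^1([\rho_0,\infty)) : \|\eta\|_* < \infty\}$ with norm $\|\eta\|_* := \sup_{\rho \ge \rho_0} (|\eta(\rho)| + |\eta'(\rho)|)/M(\rho)$. Define $T$ by the right-hand side of \eqref{eq:2.15}, with the constants $C_1, C_2$ in $\eta_h := C_1\Phi_1 + C_2\Phi_2$ chosen so that $\eta_h(\rho_0) = \alpha$ and $\eta_h'(\rho_0) = \beta$; this $2 \times 2$ system is solvable with $|C_1|+|C_2|\lesssim \alpha+\beta$ because the Wronskian $W(\rho_0)$ is nonzero. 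Since the integral in $T$ and its $\rho$-derivative both vanish at $\rho = \rho_0$ (the integration range degenerates and $K(\rho_0,\rho_0) = 0$), $T[\eta]$ satisfies the prescribed initial conditions for every $\eta \in X$.

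The next task is to show that $T$ maps a suitable ball $B_R = \{\eta\in X : \|\eta\|_*\le R\}$ into itself. By \eqref{eq:2.14}, $|\eta_h(\rho)| + |\eta_h'(\rho)| \lesssim (\alpha+\beta)\, Q(\rho,\rho_0) \le (\alpha+\beta) M(\rho)$, so $\|\eta_h\|_* \lesssim \alpha+\beta$. By \eqref{eq:3.1}, the source integral and its derivative are bounded by $\int Q(\rho,\tau)|I(\tau)|\,d\tau \le M(\rho)$, giving an $O(1)$ contribution to $\|T[\eta]\|_*$. For the linear perturbation $L_1\eta + L_2\eta'$, Lemma~\ref{Lemma:2.1} first guarantees that $|L_1|, |L_2|$ are pointwise small on $[\rho_0,\infty)$ once $\rho_0$ is large, while Lemma~\ref{Lemma:2.4} reduces weighted integrals of $|L_i|$ to weighted integrals of $|f'(\phi)F(\phi)-q_f|$ plus a boundary term that becomes negligible as $\rho_0 \to \infty$; Lemma~\ref{Lemma:2.1}(i) makes $|f'(\phi)F(\phi)-q_f|$ comparable to $|I|$, so these contributions close against the weight $M$. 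Combined with the uniform smallness of $\int Q|I|$ from Lemma~\ref{Lemma:2.2}, the net bound is $\|\int K(L_1\eta+L_2\eta')d\tau\|_* \le \delta(\rho_0)\|\eta\|_*$ with $\delta(\rho_0)\to 0$. The nonlinear term is handled by Lemma~\ref{Lemma:2.3}, producing a contribution of at most $C\|\eta\|_*^2$ times an integral of $Q\cdot M^2$ that is controlled by $M(\rho)$ through the smallness of $M$ for large $\rho_0$.

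Assembling the estimates gives $\|T[\eta]\|_* \le C_*(1+\alpha+\beta) + \delta(\rho_0)\|\eta\|_* + C\|\eta\|_*^2$, so choosing $R$ slightly larger than $C_*$, taking $\rho_0$ large, and $\alpha+\beta$ small makes $T$ a self-map of $B_R$. The analogous contraction estimate for $T[\eta_1] - T[\eta_2]$ uses the same ingredients: the linear perturbations contribute $\delta(\rho_0)\|\eta_1-\eta_2\|_*$, and Lemma~\ref{Lemma:2.3} bounds the nonlinear difference by $C(\|\eta_1\|_* + \|\eta_2\|_*)\|\eta_1-\eta_2\|_*$ times an integral that becomes small for $\rho_0$ large. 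For $\rho_0$ sufficiently large and $\alpha+\beta$ sufficiently small, $T$ is a contraction on $B_R$, and its unique fixed point $\eta$ solves \eqref{eq:2.15} with the prescribed initial data. The asymptotic bound $|\eta(\rho)| + |\eta'(\rho)| = O(M(\rho))$ is exactly $\|\eta\|_* < \infty$, and $C^2$ regularity on $(\rho_0,\infty)$ follows by differentiating the fixed-point identity and reading off the ODE \eqref{eq:2.11}.

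The main obstacle is the estimate for the linear perturbation terms, in particular for $L_2\eta'$. The coefficient $L_2(\tau) = 4(f(\phi)F(\phi)/\phi - 1/(p_f-1))$ tends to zero only pointwise, and in general $\int_{\rho_0}^\infty |L_2(\tau)|\,d\tau$ need not be finite, so one cannot absorb the perturbation via an $L^1$-norm of $L_2$. The key device is Lemma~\ref{Lemma:2.4}, whose integration-by-parts identity exchanges each weighted integral of $|L_i|$ for one of $|f'(\phi)F(\phi) - q_f|$ together with an explicit boundary contribution that vanishes as $\rho_0\to\infty$. Since $|f'(\phi)F(\phi) - q_f| \simeq |I|$ by Lemma~\ref{Lemma:2.1}(i), the resulting bounds close exactly against the weight $M$, producing the smallness needed to invoke the contraction mapping principle.
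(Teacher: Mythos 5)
Your overall architecture (fixed point for the map defined by the right-hand side of \eqref{eq:2.15} in a weighted $C^1$ space, with $C_1,C_2$ fixed by the initial data and Lemmas~\ref{Lemma:2.1}--\ref{Lemma:2.4} handling $L_1\eta+L_2\eta'$) matches the paper, but the way you close the quadratic estimate has a genuine gap. Your key claim that the contribution of $N[\eta]$ is ``$C\|\eta\|_*^2$ times an integral of $Q\cdot M^2$ that is controlled by $M(\rho)$ through the smallness of $M$ for large $\rho_0$'' is not correct: $M$ is \emph{not} small for large $\rho_0$, since $\sup_{\tau\ge\rho_0}Q(\tau,\rho_0)\simeq 1$ independently of $\rho_0$; only the piece $\sup_{\tau\ge\rho_0}\int_{\rho_0}^\tau Q(\tau,t)|I(t)|\,dt$ tends to $0$ (Lemma~\ref{Lemma:2.2}). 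Consequently $\int_{\rho_0}^{\rho}Q(\rho,\tau)M(\tau)^2\,d\tau$ is only $O(M(\rho))$ with an order-one constant (e.g. $\int_{\rho_0}^{\rho}e^{-\Lambda(\rho-\tau)}e^{-2\Lambda(\tau-\rho_0)}\,d\tau\simeq \Lambda^{-1}e^{-\Lambda(\rho-\rho_0)}$), so your closing inequality has the form $C_*(1+\alpha+\beta)+o(1)R+cCR^2\le R$ with $c,C,C_*$ fixed constants depending only on $N,q_f$. Since the source term $\int K(\rho,\tau)I(\tau)\,d\tau$ forces $C_*$ to be of order one (its ratio to $M$ cannot be made small by enlarging $\rho_0$ or shrinking $\alpha+\beta$), this quadratic inequality need not admit any admissible $R$, and the Lipschitz constant of the nonlinear part on $B_R$ need not be below $1$. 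The paper closes precisely because its weight is $\delta Q(\rho,\rho_0)+\int_{\rho_0}^{\rho}Q(\rho,\tau)|I(\tau)|\,d\tau$ with an additional small parameter $\delta$ and because $\alpha+\beta\lesssim\delta$: then the self-interaction $\delta^2Q(\tau,\rho_0)^2$ convolves to $\delta\cdot\bigl(\delta Q(\rho,\rho_0)\bigr)$, gaining a factor $\delta$, while the $\bigl(\int Q|I|\bigr)^2$ interaction gains the factor $\sup_\tau\int Q|I|\to0$. Some such extra smallness mechanism is needed; ``$\rho_0$ large'' alone does not provide it.

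Second, even granting the above, you give no argument for the estimate that keeps the quadratic term inside the weight, namely $\int_{\rho_0}^{\rho}Q(\rho,\tau)\bigl(\int_{\rho_0}^{\tau}Q(\tau,t)|I(t)|\,dt\bigr)^2 d\tau \lesssim \bigl(\sup_{\tau\ge\rho_0}\int_{\rho_0}^{\tau}Q(\tau,t)|I(t)|\,dt\bigr)\int_{\rho_0}^{\rho}Q(\rho,\tau)|I(\tau)|\,d\tau$. The obvious route---pull out one supremum and exchange the order of integration---fails because $\int_t^{\rho}Q(\rho,\tau)Q(\tau,t)\,d\tau\simeq(\rho-t)e^{-\Lambda(\rho-t)}$ is not $O(Q(\rho,t))$; this loss would degrade the kernel and hence the asserted asymptotics (on which the borderline rates of Section~\ref{section:4} depend). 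The paper removes the loss by integrating by parts in $\tau$ and discarding signed boundary terms, once in the case $p_f\neq p_*$ and iteratively in the resonant case $p_f=p_*$, where the kernel $(1+(\rho-\tau))e^{-\lambda_*(\rho-\tau)}$ forces the two-stage estimates \eqref{eq:3.16}--\eqref{eq:3.17}; this is the technical core of the proof and is absent from your sketch. A minor further slip: $|C_1|+|C_2|\lesssim\alpha+\beta$ is false (e.g. $C_1\simeq(\alpha+\beta)e^{\lambda_1\rho_0}$ when $p_\mathrm{c}<p_f<p_*$), although the bound you actually use, $|C_1\Phi_1(\rho)+C_2\Phi_2(\rho)|+|C_1\Phi_1'(\rho)+C_2\Phi_2'(\rho)|\lesssim(\alpha+\beta)Q(\rho,\rho_0)$, is correct after combining the two terms as in \eqref{eq:3.8}--\eqref{eq:3.9}.
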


\begin{remark}\label{Remark:3.1}
  Set 
  \begin{equation}
  \label{eq:3.2}
  \Lambda
  :=
  \left\{
    \begin{aligned}
      & \lambda_1 &&\text{if} \ \ p_\mathrm{c}<p_f<p_*,
      \\
      & \lambda_* &&\text{if} \ \ p_f=p_*, 
      \\
      & \frac{a}{2} &&\text{if} \ \ p_*<p_f<p_\mathrm{S}.
    \end{aligned}
  \right.
  \end{equation}
  We comment on the behavior of $\eta$ as $\rho\to\infty$ in the following two cases$:$ 
  \begin{center}
    \begin{tabular}{>{\centering}p{0.45\linewidth}>{\centering}p{0.45\linewidth}}
      $\displaystyle \mathrm{Case~(A)} \quad \int_{\rho_0}^{\infty}e^{\Lambda \tau}|I(\tau)| \, d\tau <\infty;$
      & 
      $\displaystyle \mathrm{Case~(B)} \quad \int_{\rho_0}^{\infty}e^{\Lambda \tau}|I(\tau)| \, d\tau =\infty.$
    \end{tabular}  
  \end{center}
  In case~$\mathrm{(A)}$, it follows that 
  \begin{equation}\label{eq:3.3}
    \int_{\rho_0}^{\rho}Q(\rho,\tau) |I(\tau)|\,d\tau
    \lesssim 
    Q(\rho,\rho_0)
    \int_{\rho_0}^{\rho}e^{\Lambda (\tau-\rho_0)}|I(\tau)|\, d\tau
    \lesssim
    e^{-{\Lambda}\rho_0} Q(\rho,\rho_0)
  \end{equation}
  for all $\rho\ge \rho_0$. 
  Thus, the size of the solution $\eta$ is controlled by a super-kernel $Q$. 
  In particular, when $p_\mathrm{c}<p_f\le p_*$, by \eqref{eq:2.15} we may assume that 
  the behavior of $\eta$ as $\rho \to \infty$
  is mainly determined by the fundamental function $\Phi_1$ or $\Phi_2$ of \eqref{eq:2.13}. 

  On the other hand, 
  in case~$\mathrm{(B)}$, we see that 
  \begin{equation}\label{eq:3.4}
    \lim_{\rho \to \infty}
    \frac{\displaystyle\int_{\rho_0}^{\rho}Q(\rho,\tau) |I(\tau)|\, d\tau}{Q(\rho,\rho_0)}
    =\infty, 
  \end{equation}
  which clearly holds if $p_f\neq p_*$ 
  and follows from 
  \[
    \frac{\displaystyle\int_{\rho_0}^{\frac{\rho}{2}}
    (1+(\rho-\tau))e^{-\lambda_*(\rho-\tau)} 
    |I(\tau)|\, d\tau}{(1+(\rho-\rho_0))e^{-\lambda_*(\rho-\rho_0)}}
    \ge
    \frac{
      \left(1+\frac{\rho}{2}\right)e^{-\lambda_*\rho}
      \displaystyle\int_{\rho_0}^{\frac{\rho}{2}}e^{\lambda_*\tau} |I(\tau)|\,d\tau
    }{(1+(\rho-\rho_0))e^{-\lambda_*(\rho-\rho_0)}}
    \to \infty
  \]
  as $\rho \to \infty$ if $p_f=p_*$. 
  Therefore, the effect of a super-kernel $Q$ becomes smaller in the behavior of $\eta$ as $\rho\to\infty$. 
  In particular, when $p_\mathrm{c}<p_f\le p_*$, 
  the principal part of the behavior of $\eta$ can be considered to be given by 
  $\displaystyle\int_{\rho_0}^{\rho} Q(\rho,\tau) |I(\tau)|\, d\tau$, 
  which corresponds to the inhomogeneous term $\displaystyle\int_{\rho_0}^{\rho}K(\rho,\tau) I(\tau)\, d\tau$ if $I$ is positive. 

  In both cases, 
  $\eta(\rho)\to 0$ as $\eta\to \infty$ by Lemma~$\mathrm{\ref{Lemma:2.2}}$, 
  so the singularity of $u$ defined by \eqref{eq:2.5} is preserved, 
  and it can be seen that $u$ is a singular solution of \eqref{eq:1.1}. 
\end{remark}

The rest of this section is devoted to the proof of Theorem~\ref{Theorem:3.1}. 

\begin{proof}[Proof of Theorem~$\mathrm{\ref{Theorem:3.1}}$]
  Let $\rho_0$ and $\epsilon$ be positive constants to be chosen later and $\alpha$, $\beta>0$ satisfy $\alpha+\beta<\epsilon$. 
  For positive constants $C_1$ and $C_2$ which will be chosen later, set 
  \[
  \Phi(\rho)
  :=
  C_1 \Phi_1(\rho)+C_2 \Phi_2(\rho),
  \]
  where $\Phi_1$ and $\Phi_2$ are fundamental functions of equation \eqref{eq:2.13}.
  We denote the right hand side of \eqref{eq:2.15} by $T[\eta](\rho)$, that is,
  \[
    T[\eta](\rho):= \Phi(\rho)-\int_{\rho_0}^{\rho}K(\rho,\tau)  
    (I(\tau)+L_1(\tau)\eta(\tau)+L_2(\tau)\eta'(\tau)+N[\eta](\tau))\, d\tau.
  \]
  Thanks to \eqref{eq:3.1}, 
  we have
  \begin{equation}\label{eq:3.5}
    \begin{aligned}
      &
      |T[\eta](\rho)|
      +
      \left|\frac{d}{d\rho}T[\eta](\rho)\right|
      \\
      &
      \lesssim 
      |\Phi(\rho)|+|\Phi'(\rho)|
      +\int_{\rho_0}^{\rho} Q(\rho,\tau)
      (|I(\tau)|+|L_1(\tau)||\eta(\tau)|+|L_2(\tau)||\eta'(\tau)|+|N[\eta](\tau)|)\, d\tau.
    \end{aligned}
  \end{equation}
  We now define the class of solutions as follows:
  \begin{equation}\label{eq:3.6}
    \begin{gathered}
      \|\eta\|_{\rho_0}
      :=
      \sup_{\rho\ge \rho_0} \frac{|\eta(\rho)|+|\eta'(\rho)|}
      {
      \delta Q(\rho,\rho_0)+\displaystyle\int_{\rho_0}^{\rho}Q(\rho,\tau)|I(\tau)|\, d\tau
      },
      \\
      X_{\rho_0}
      :=\left\{\eta\in C^1((\rho_0,\infty))\colon\|\eta\|_{\rho_0}\le 2,\ \eta(\rho_0)=\alpha,\ \eta'(\rho_0)=\beta \right\},
    \end{gathered}
  \end{equation}
  where $\delta>0$ is a constant to be chosen later.

  To show that $T$ is a map from $X_{\rho_0}$ to itself, 
  we start by estimating the fundamental function term $|\Phi(\rho)|+|\Phi'(\rho)|$.
  If $T[\eta]$ satisfies the initial condition $T[\eta](\rho_0)=\alpha$ and $(T[\eta])'(\rho_0)=\beta$, 
  then $C_1$ and $C_2$ must satisfy
  \begin{equation*} 
    \alpha
    =C_1\Phi_1(\rho_0)+C_2\Phi_2(\rho_0),
    \qquad
    \beta
    =C_1\Phi_1'(\rho_0)+C_2\Phi_2'(\rho_0),
  \end{equation*}
  and hence
  \begin{equation}\label{eq:3.7}
    C_1
    =
    \frac{\Phi_2'(\rho_0)\alpha-\Phi_2(\rho_0)\beta}{W(\rho_0)},
    \qquad 
    C_2
    =
    \frac{-\Phi_1'(\rho_0)\alpha+\Phi_1(\rho_0)\beta}{W(\rho_0)}.
  \end{equation}
  Vise versa, if $C_1$ and $C_2$ satisfy \eqref{eq:3.7}, then $T[\eta]$ satisfies the initial condition, i.e., 
  $T[\eta](\rho_0)=\alpha$ and $(T[\eta])'(\rho_0)=\beta$. 
  Therefore, we take $C_1$ and $C_2$ as in \eqref{eq:3.7}.
  It follows from \eqref{eq:3.7} that 
  \[
  \Phi(\rho)
  =
  C_1\Phi_1(\rho)+C_2\Phi_2(\rho)
  =
  \frac{\Phi_2'(\rho_0)\Phi_1(\rho)-\Phi_1'(\rho_0)\Phi_2(\rho)}{W(\rho_0)}\alpha+K(\rho,\rho_0)\beta.
  \]
  By simple computations, we have
  \begin{equation}\label{eq:3.8}
    \begin{aligned}
      \left|
      \frac{\Phi_2'(\rho_0)\Phi_1(\rho)-\Phi_1'(\rho_0)\Phi_2(\rho)}{W(\rho_0)}
      \right|
      +
      \left|
      \frac{d}{d\rho}
      \left(
      \frac{\Phi_2'(\rho_0)\Phi_1(\rho)-\Phi_1'(\rho_0)\Phi_2(\rho)}{W(\rho_0)}
      \right)
      \right|
      \lesssim
      Q(\rho,\rho_0).
    \end{aligned}
  \end{equation}
  In fact,
  if $p_\mathrm{c}<p_f<p_*$, then
  \[
    \begin{gathered}
      \frac{\Phi_2'(\rho_0)\Phi_1(\rho)-\Phi_1'(\rho_0)\Phi_2(\rho)}{W(\rho_0)}
      =
      \frac{\lambda_2}{\lambda_2-\lambda_1}e^{-\lambda_1(\rho-\rho_0)}
      -
      \frac{\lambda_1}{\lambda_2-\lambda_1}e^{-\lambda_2(\rho-\rho_0)},
      \\
      \frac{d}{d\rho}
      \left(\frac{\Phi_2'(\rho_0)\Phi_1(\rho)-\Phi_1'(\rho_0)\Phi_2(\rho)}{W(\rho_0)}\right)
      =
      -\frac{\lambda_1\lambda_2}{\lambda_2-\lambda_1}
      \left(
      e^{-\lambda_1(\rho-\rho_0)}
      -
      e^{-\lambda_2(\rho-\rho_0)}
      \right),
    \end{gathered}
  \]
  \item
  if $p_f=p_*$, then
  \[
    \begin{gathered}
      \frac{\Phi_2'(\rho_0)\Phi_1(\rho)-\Phi_1'(\rho_0)\Phi_2(\rho)}{W(\rho_0)}
      =
      (1-\lambda_*(\rho-\rho_0))
      e^{-\lambda_*(\rho-\rho_0)},
      \\
      \frac{d}{d\rho}\left(\frac{\Phi_2'(\rho_0)\Phi_1(\rho)-\Phi_1'(\rho_0)\Phi_2(\rho)}{W(\rho_0)}\right)
      =
      -\lambda_*^2(\rho-\rho_0)
      e^{-\lambda_*(\rho-\rho_0)}, 
    \end{gathered}
  \]
  and if 
  $p_*<p_f<p_\mathrm{S}$, then
  \[
    \begin{gathered}
      \frac{\Phi_2'(\rho_0)\Phi_1(\rho)-\Phi_1'(\rho_0)\Phi_2(\rho)}{W(\rho_0)}
      =
      \frac{1}{k}
      e^{-\frac{a}{2}(\rho-\rho_0)}
      \left(
      \frac{a}{2}\sin k(\rho-\rho_0)+k\cos k(\rho-\rho_0)
      \right),
      \\
      \frac{d}{d\rho}\left(\frac{\Phi_2'(\rho_0)\Phi_1(\rho)-\Phi_1'(\rho_0)\Phi_2(\rho)}{W(\rho_0)}\right)
      =
      -\frac{1}{k}e^{-\frac{a}{2}(\rho-\rho_0)}
      \left(
      \frac{a^2}{4}+k^2
      \right)\sin k(\rho-\rho_0).
    \end{gathered}
  \]
  Therefore, \eqref{eq:3.8} holds for all cases. 
  Combining \eqref{eq:3.8} with \eqref{eq:3.1}, we have
  \begin{equation}\label{eq:3.9}
    |\Phi(\rho)|+|\Phi'(\rho)|
    \lesssim 
    (\alpha+\beta)Q(\rho,\rho_0)
  \end{equation}
  for all $\rho\ge\rho_0$. 

  It remains to estimate 
  the linear terms $L_1\eta,L_2\eta'$ and the nonlinear term $N[\eta]$ for $\eta \in X_{\rho_0}$. 
  Let $\eta \in X_{\rho_0}$. 
  By \eqref{eq:3.1}, a linear term including $L_1\eta$ satisfies
  \begin{equation}\label{eq:3.10}
    \begin{aligned}
      &
      \left| \int_{\rho_0}^{\rho} K(\rho,\tau)L_1(\tau)\eta(\tau)\, d\tau \right| 
      \\
      &
      \lesssim 
      \int_{\rho_0}^{\rho} Q(\rho,\tau)|L_1(\tau)|
      \delta Q(\tau,\rho_0)
      d\tau
      +
      \int_{\rho_0}^{\rho} Q(\rho,\tau)|L_1(\tau)|
      \left(
      \int_{\rho_0}^{\tau}Q(\tau,t)|I(t)|dt
      \right)
      d\tau
    \end{aligned}
  \end{equation}
  for $\eta\in X_{\rho_0}$. 
  By \eqref{eq:2.8}, \eqref{eq:2.10} and Lemmas~\ref{Lemma:2.1} and \ref{Lemma:2.4}, 
  the first term on the right hand side of \eqref{eq:3.10} satisfies
  \begin{equation}\label{eq:3.11}
    \begin{aligned}
      \int_{\rho_0}^{\rho} Q(\rho,\tau)|L_1(\tau)|
      \delta Q(\tau,\rho_0)\,
      d\tau
      &
      \lesssim
      \delta 
      \left(
      \sup_{\tau \ge \rho_0}Q(\tau,\rho_0)
      \right)
      \int_{\rho_0}^{\rho} Q(\rho,\tau)|L_1(\tau)|\, 
      d\tau
      \\
      &
      \lesssim
      \delta 
      \left(
      \int_{\rho_0}^{\rho} Q(\rho,\tau)|I(\tau)|\,
      d\tau
      +
      \epsilon_{\rho_0} Q(\rho,\rho_0)
      \right), 
    \end{aligned}
  \end{equation}
  where $\epsilon_{\rho_0}$ is the constant defined by 
  \begin{equation}
    \notag 
    \epsilon_{\rho_0} := 
    \left|
      \frac{f(\phi(\rho_0))F(\phi(\rho_0))}{\phi(\rho_0)} 
      - \frac{1}{p_f-1} 
    \right|. 
  \end{equation}
  Meanwhile, similarly to \eqref{eq:3.11}, 
  the second term on the right hand side of \eqref{eq:3.10} satisfies
  \[
    \begin{aligned}
      &
      \int_{\rho_0}^{\rho} Q(\rho,\tau)|L_1(\tau)|
      \left(
        \int_{\rho_0}^{\tau}Q(\tau,t)|I(t)|\, dt
      \right)
      d\tau
      \\
      &
      \lesssim
      \left(
        \sup_{\tau\ge \rho_0}
        \int_{\rho_0}^{\tau}Q(\tau,t)|I(t)|\, dt
      \right)
        \int_{\rho_0}^{\rho}Q(\rho,\tau)|L_1(\tau)|\, d\tau
      \\
      &
      \lesssim
      \left(
        \sup_{\tau\ge \rho_0}
        \int_{\rho_0}^{\tau}Q(\tau,t)|I(t)|\, dt
      \right)
      \left(
        \int_{\rho_0}^{\rho}Q(\rho,\tau)|I(\tau)|
        \, d\tau
        +
        \epsilon_{\rho_0} Q(\rho,\rho_0)
      \right).
    \end{aligned}
  \]
  Taking sufficiently small $\delta$ 
  and
  then taking sufficiently large $\rho_0$, we see from
  Lemma~\ref{Lemma:2.2} (ii) and (iv) that 
  \begin{equation}\label{eq:3.12}
    \begin{aligned}
      &
      \int_{\rho_0}^{\rho} Q(\rho,\tau)
      |L_1(\tau)||\eta(\tau)|\, d\tau
      \le
      \frac{1}{4}
      \left(
        \int_{\rho_0}^{\rho}Q(\rho,\tau)|I(\tau)|
        d\tau
        +
        \delta Q(\rho,\rho_0)
      \right)
    \end{aligned}
  \end{equation}
  for all $\rho\ge \rho_0$. 
  By the same argument as above, it holds
  \begin{equation}\label{eq:3.13}
    \begin{aligned}
      &
      \int_{\rho_0}^{\rho}Q(\rho,\tau)
      |L_2(\tau)||\eta'(\tau)|\, d\tau
      \le
      \frac{1}{4}
      \left(
        \int_{\rho_0}^{\rho} Q(\rho,\tau)|I(\tau)|
        d\tau
        +
        \delta Q(\rho,\rho_0)
      \right)
    \end{aligned}
  \end{equation}
  for all $\rho\ge \rho_0$. 

  It remains to estimate the nonlinear term. 
  We divide the proof into the two cases: 
  $p_f=p_*$ and $p_f\neq p_*$. 

  \medskip 
  \noindent
  \underline{Case $p_f=p_*$}\quad 
  We first consider the case $p_f=p_*$.
  Lemma~\ref{Lemma:2.3} with $\eta_1=\eta$ and $\eta_2=0$ and \eqref{eq:3.1} 
  yield 
  \begin{equation}\label{eq:3.14}
    \begin{aligned}
      &
      \left| \int_{\rho_0}^{\rho}K(\rho,\tau)N[\eta](\tau)\, d\tau \right| 
      \\
      &
      \lesssim
      \int_{\rho_0}^{\rho}(1+(\rho-\tau))e^{-\lambda_*(\rho-\tau)}
      |\eta(\tau)|^2\, 
      d\tau
      \\
      &
      \lesssim
      \delta^2
      \int_{\rho_0}^{\rho}( 1+(\rho-\tau) )e^{-\lambda_*(\rho-\tau)}
      (1+(\tau-\rho_0))^2 e^{-2\lambda_*(\tau-\rho_0)}\, 
      d\tau
      \\
      &
      \qquad
      +
      \int_{\rho_0}^{\rho}(1+(\rho-\tau))e^{-\lambda_*(\rho-\tau)}
      \left(
        \int_{\rho_0}^{\tau}(1+(\tau-t))e^{-\lambda_*(\tau-t)}|I(t)|\, dt
      \right)^2
      d\tau
      \\
      &
      =:N_1+N_2.
    \end{aligned}
  \end{equation}
  $N_1$ can be estimated as follows: 
  \begin{equation}\label{eq:3.15}
    \begin{aligned}
      N_1
      &
      \lesssim
      \delta^2
      (1+(\rho-\rho_0))e^{-\lambda_*(\rho-\rho_0)}
      \int_{\rho_0}^{\infty}
      (1+(\tau-\rho_0))^2e^{-\lambda_*(\tau-\rho_0)}\, 
      d\tau
      \\
      &
      \lesssim
      \delta^2
      (1+(\rho-\rho_0))e^{-\lambda_*(\rho-\rho_0)}.
    \end{aligned}
  \end{equation}
  Meanwhile, for $N_2$, by integration by parts we have 
  \[
    \begin{aligned}
      N_2
      &
      = 
      e^{-\lambda_*(\rho-\rho_0)}
      \int_{\rho_0}^{\rho}
      (1+(\rho-\tau))
      e^{-\lambda_*(\tau-\rho_0)}
      \left(
        \int_{\rho_0}^{\tau}(1+(\tau-t))e^{\lambda_*(t-\rho_0)}|I(t)|\, dt
      \right)^2
      d\tau
      \\
      &
      =
      e^{-\lambda_*(\rho-\rho_0)}
      \left[
        -\frac{e^{-\lambda_*(\tau-\rho_0)}}{\lambda_*}
        (1+(\rho-\tau))
        \left(
          \int_{\rho_0}^{\tau}(1+(\tau-t))e^{\lambda_*(t-\rho_0)}|I(t)|\, dt
        \right)^2
      \right]_{\tau=\rho_0}^{\tau=\rho}
      \\
      &
      \quad
      +
      \frac{e^{-\lambda_*(\rho-\rho_0)}}{\lambda_*}
      \int_{\rho_0}^{\rho}
      e^{-\lambda_*(\tau-\rho_0)}
      (-1)
      \left(
        \int_{\rho_0}^{\tau}(1+(\tau-t))e^{\lambda_*(t-\rho_0)}|I(t)|\, dt
      \right)^2
      d\tau
      \\
      &
      \quad
      +
      \frac{2e^{-\lambda_*(\rho-\rho_0)}}{\lambda_*}
      \int_{\rho_0}^{\rho}
      e^{-\lambda_*(\tau-\rho_0)}
      (1+(\rho-\tau))
      \left(
        e^{\lambda_*(\tau-\rho_0)}|I(\tau)|
        +
        \int_{\rho_0}^{\tau}e^{\lambda_*(t-\rho_0)}|I(t)|\, dt
      \right)
      \\
      &
      \hspace{20.5em}
      \times
      \left(\int_{\rho_0}^{\tau}(1+(\tau-t))e^{\lambda_*(t-\rho_0)}|I(t)|\, dt\right)
      d\tau.
    \end{aligned}
  \]
  One can neglect the first and the second term since they are negative, hence we only 
  estimate the third term. 
  Setting 
  \[
    \begin{aligned}
      N_{2,1}
      &
      :=
      \int_{\rho_0}^{\rho}
      e^{-\lambda_*(\tau-\rho_0)}
      (1+(\rho-\tau))
      e^{\lambda_*(\tau-\rho_0)}
      |I(\tau)|
      \left(
        \int_{\rho_0}^{\tau}(1+(\tau-t))e^{\lambda_*(t-\rho_0)}|I(t)|\, dt
      \right)
      d\tau,
      \\
      N_{2,2}
      &
      :=
      \int_{\rho_0}^{\rho}
      e^{-\lambda_*(\tau-\rho_0)}
      (1+(\rho-\tau))
      \left(
        \int_{\rho_0}^{\tau}e^{\lambda_*(t-\rho_0)}|I(t)|\, dt
        \int_{\rho_0}^{\tau}(1+(\tau-t))e^{\lambda_*(t-\rho_0)}|I(t)|\, dt
      \right)
      d\tau, 
    \end{aligned}
  \]
  we have \[ N_2\le \frac{2e^{-\lambda_*(\rho-\rho_0)}}{\lambda_*}(N_{2,1} + N_{2,2}).\] 
  One can easily estimate $N_{2,1}$ as follows: 
  \begin{equation}\label{eq:3.16}
  \begin{aligned}
  &
  \frac{2e^{-\lambda_*(\rho-\rho_0)}}{\lambda_*}N_{2,1} 
  \\
  &
  \le 
  \frac{2}{\lambda_*}
  \left(
    \sup_{\tau\ge \rho_0}
    \int_{\rho_0}^{\tau}(1+(\tau-t))e^{-\lambda_*(\tau-t)}|I(t)|\, dt
  \right)
  \int_{\rho_0}^{\rho}
  (1+(\rho-\tau))
  e^{-\lambda_*(\rho-\tau)}|I(\tau)|\, d\tau.
  \end{aligned}
  \end{equation}
  An estimate for $N_{2,2}$ is more involved. 
  Applying integration by parts again and neglecting negative parts similar to the previous argument, we have
  \[
    \begin{aligned}
      N_{2,2}
      &
      \le
      \frac{1}{\lambda_*} 
      \int_{\rho_0}^{\rho}
      e^{-\lambda_*(\tau-\rho_0)}
      (1+(\rho-\tau))
      e^{\lambda_*(\tau-\rho_0)}I(\tau)
      \left(
        \int_{\rho_0}^{\tau}(1+(\tau-t))e^{\lambda_*(t-\rho_0)}I(t)\, dt
      \right)
      d\tau
      \\
      &
      \quad 
      +
      \frac{1}{\lambda_*} 
      \int_{\rho_0}^{\rho}
      e^{-\lambda_*(\tau-\rho_0)}
      (1+(\rho-\tau))
      \left(
        \int_{\rho_0}^{\tau}e^{\lambda_*(t-\rho_0)}I(t)\, dt
      \right)
      e^{\lambda_*(\tau-\rho_0)}I(\tau)\, 
      d\tau 
      \\
      &
      \quad 
      +
      \frac{1}{\lambda_*} 
      \int_{\rho_0}^{\rho}
      e^{-\lambda_*(\tau-\rho_0)}
      (1+(\rho-\tau))
      \left(
        \int_{\rho_0}^{\tau}e^{\lambda_*(t-\rho_0)}I(t)\, dt
      \right)^2
      d\tau.
    \end{aligned}
  \]
  Therefore, similarly to \eqref{eq:3.16}, we obtain
  \begin{equation}\label{eq:3.17}
    \begin{aligned}
      &
      \frac{2e^{-\lambda_*(\rho-\rho_0)}}{\lambda_*}N_{2,2}
      \\
      &
      \le 
      \frac{2}{\lambda_*^2}
      \left(
      \sup_{\tau\ge \rho_0}
      \int_{\rho_0}^{\tau}(1+(\tau-t))e^{-\lambda_*(\tau-t)}I(t)\, dt
      +
      \sup_{\tau\ge \rho_0}
      \int_{\rho_0}^{\tau}e^{-\lambda_*(\tau-t)}I(t)\, dt
      \right)
      \\
      &
      \hspace{16.5em}
      \times
      \int_{\rho_0}^{\rho}
      (1+(\rho-\tau))
      e^{-\lambda_*(\rho-\tau)}I(t)\, d\tau
      \\
      &
      \qquad
      +
      \frac{2e^{-\lambda_*(\rho-\rho_0)}}{\lambda_*^2}
      \int_{\rho_0}^{\rho}
      e^{-\lambda_*(\tau-\rho_0)}
      (1+(\rho-\tau))
      \left(
        \int_{\rho_0}^{\tau}e^{\lambda_*(t-\rho_0)}I(t)\, dt
      \right)^2
      d\tau.
    \end{aligned}
  \end{equation}
  Applying integration by parts again to the last term on the right hand side of \eqref{eq:3.17}, we obtain
  \[
    \begin{aligned}
      &
      \frac{2e^{-\lambda_*(\rho-\rho_0)}}{\lambda_*^2}
      \int_{\rho_0}^{\rho}
      e^{-\lambda_*(\tau-\rho_0)}
      (1+(\rho-\tau))
      \left(
        \int_{\rho_0}^{\tau}e^{\lambda_*(t-\rho_0)}|I(t)|\, dt
      \right)^2
      d\tau
      \\
      &
      \le
      \frac{4e^{-\lambda_*(\rho-\rho_0)}}{\lambda_*^3}
      \int_{\rho_0}^{\rho}
      e^{-\lambda_*(\tau-\rho_0)}
      (1+(\rho-\tau))
      e^{\lambda_*(\tau-\rho_0)}|I(\tau)|
      \left(
        \int_{\rho_0}^{\tau}e^{\lambda_*(t-\rho_0)}|I(t)|\, dt
      \right)
      d\tau
      \\
      &
      \le
      \frac{4}{\lambda_*^3}
      \left(\sup_{\tau\ge \rho_0}\int_{\rho_0}^{\tau}e^{-\lambda_*(\tau-t)}|I(t)|\, dt\right)
      \int_{\rho_0}^{\rho}
      (1+(\rho-\tau))
      e^{-\lambda_*(\rho-\tau)}|I(\tau)|\, d\tau.
    \end{aligned}
  \]
  All together, for sufficiently small $\delta>0$ and sufficiently large $\rho_0$, 
  we obtain 
  \begin{equation}\label{eq:3.18}
    \left| \int_{\rho_0}^{\rho}K(\rho,\tau)N[\eta](\tau)\, d\tau \right| 
    \lesssim 
    N_1+N_2
    \le 
    \frac{1}{4}
    \left(
      \int_{\rho_0}^{\rho} Q(\rho,\tau)|I(\tau)|\, 
      d\tau
      +
      \delta Q(\rho,\rho_0)
    \right)
  \end{equation}
  for all $\rho\ge \rho_0$ with the help of Lemma~\ref{Lemma:2.2}~(iv). 

  \medskip 

  \noindent 
  \underline{Case $p_f\neq p_*$}\quad 
  We next consider the case $p_f\neq p_*$. 
  In this case, it holds 
  $
  Q(\rho,\tau)=e^{-\Lambda(\rho-\tau)}.
  $
  Here $\Lambda$ is the constant given by \eqref{eq:3.2}. 
  Similarly to the above argument for the case $p_f=p^*$, 
  we have
  \begin{equation}\label{eq:3.19}
    \begin{aligned}
      &
      \left| \int_{\rho_0}^{\rho}K(\rho,\tau)N[\eta](\tau)\, d\tau \right| 
      \lesssim
      \int_{\rho_0}^{\rho}e^{-\Lambda(\rho-\tau)}
      |\eta(\tau)|^2\, d\tau
      \\
      &
      \quad 
      \lesssim
      \delta^2
      \int_{\rho_0}^{\rho}e^{-\Lambda(\rho-\tau)}
      e^{-2\Lambda(\tau-\rho_0)}
      d\tau
      +
      \int_{\rho_0}^{\rho}
      e^{-\Lambda(\rho-\tau)}
      \left(
        \int_{\rho_0}^{\tau}e^{-\Lambda(\tau-t)}|I(t)|\, dt
      \right)^2
      d\tau
      \\
      &
      \quad 
      \lesssim
      \delta^2 e^{-\Lambda (\rho-\rho_0)}
      +
      e^{-\Lambda(\rho-\rho_0)}
      \int_{\rho_0}^{\rho}
      e^{-\Lambda(\tau-\rho_0)}
      \left(
        \int_{\rho_0}^{\tau}e^{\Lambda(t-\rho_0)}|I(t)|\, dt
      \right)^2
      d\tau
    \end{aligned}
  \end{equation}
  for all $\rho\ge \rho_0$. 
  Applying integration by parts and neglecting the negative part yields 
  \[
    \begin{aligned}
      &
      e^{-\Lambda(\rho-\rho_0)}
      \int_{\rho_0}^{\rho}
      e^{-\Lambda(\tau-\rho_0)}
      \left(\int_{\rho_0}^{\tau}e^{\Lambda(t-\rho_0)}|I(t)|\, dt\right)^2
      d\tau
      \\
      &
      \le
      \frac{2}{\Lambda}
      e^{-\Lambda(\rho-\rho_0)}
      \int_{\rho_0}^{\rho}
      e^{-\Lambda(\tau-\rho_0)}
      e^{\Lambda(\tau-\rho_0)}|I(\tau)|
      \left(
        \int_{\rho_0}^{\tau}e^{\Lambda(t-\rho_0)}|I(t)|\, dt
      \right)
      d\tau
      \\
      &
      \le
      \frac{2}{\Lambda}
      \left(\sup_{\tau \ge \rho_0}\int_{\rho_0}^{\tau}e^{-\Lambda(\tau-t)}|I(t)|\, dt\right)
      \int_{\rho_0}^{\rho}
      e^{-\Lambda(\rho-\tau)}
      |I(\tau)|\, d\tau.
    \end{aligned}
  \]
  Therefore, 
  combining all of the above and 
  taking sufficiently small $\delta>0$ and sufficiently large $\rho_0$, we obtain
  \begin{equation}\label{eq:3.20}
    \begin{aligned}
      \left| \int_{\rho_0}^{\rho}K(\rho,\tau)N[\eta](\tau)\, d\tau \right| 
      \le
      \frac{1}{4}
      \left(
        \int_{\rho_0}^{\rho} Q(\rho,\tau)|I(\tau)|\, d\tau
        +
        \delta Q(\rho,\rho_0)
      \right)
    \end{aligned}
  \end{equation}
  for all $\rho\ge \rho_0$ with the help of Lemma~\ref{Lemma:2.2}~(iv). 

  \medskip 
  We are ready to conclude the proof. 
  We first take a sufficiently small $\delta>0$, then take sufficiently large $\rho_0>0$, 
  thus we see from \eqref{eq:3.12}, \eqref{eq:3.13}, \eqref{eq:3.18} and \eqref{eq:3.20} that 
  for $|\alpha|+|\beta|<\frac{\delta}{4}$, 
  \[
    |T[\eta](\rho)|
    +
    \left|\frac{d}{d\rho}T[\eta](\rho)\right|
    \le
    \delta Q(\rho,\rho_0)+\frac{7}{4}\int_{\rho_0}^{\rho}Q(\rho,\tau)|I(\tau)|\, d\tau,
  \]
  which implies that 
  $T$ is a map from $X_{\rho_0}$ to itself. 
  Similarly, 
  if $\delta>0$ is sufficiently small and $\rho_0>0$ is sufficiently large, 
  then one can prove that
  \[
  \|T[\eta_1]-T[\eta_2]\|_{\rho_0}\le \frac{1}{2}\|\eta_1-\eta_2\|_{\rho_0}
  \]
  for all $\eta_1,\eta_2\in X_{\rho_0}$. 
  Therefore, the existence of a solution of \eqref{eq:2.15} can be shown using 
  the fixed point theorem, 
  and the proof of Theorem~\ref{Theorem:3.1} is complete. 
\end{proof}

\section{Examples}
\label{section:4}
In this section, we show some examples to which our theorem applies. 
Since the behavior of $\theta(r)$ is converted to the behavior of $\eta(\rho)$ by $\theta(r)=\eta(\rho)$ and $\rho=\log (1/r)$ (see \eqref{eq:2.7}), 
we only examine the latter.
Note that Corollary~\ref{Corollary:1.1} is a direct consequence of Example~\ref{Example:4.1}. 
\begin{example}
  \label{Example:4.1}
  Let $f(s)=s^p+s^r$ with $p_\mathrm{c}<p<p_\mathrm{S}$ and $1<r<p$. 
  Then $p_f=p$ and 
  \begin{align}
    \label{eq:4.1}
    f'(s)
    &
    =ps^{p-1}+rs^{r-1}, 
    \\
    \label{eq:4.2}
    F(s)
    &
    =\sum_{k=0}^{\infty}\frac{(-1)^k }{p+k(p-r)-1}s^{1-p-k(p-r)},
    \\
    \label{eq:4.3}
    f'(s)F(s)
    &
    =
    \frac{p}{p-1}+\sum_{k=1}^{\infty}
    \frac{(p-r)(1-k(p-r))}{(p+k(p-r)-1)(p+(k-1)(p-r)-1)}
    (-1)^{k-1}s^{-k(p-r)},
    \\
    \label{eq:4.4}
    \frac{f(s)F(s)}{s}
    &
    =
    \frac{1}{p-1}+\sum_{k=1}^{\infty}
    \frac{p-r}{(p+k(p-r)-1)(p+(k-1)(p-r)-1)}(-1)^{k-1}s^{-k(p-r)},
    \end{align}
  for all $s>1$, and 
  \begin{equation}
    \label{eq:4.5}
    I(\rho)\simeq \frac{1}{\phi(\rho)^{p-r}}\simeq e^{-\frac{2(p-r)}{p-1}\rho}
  \end{equation}
  for all sufficiently large $\rho>0$. 
  Let $r^*$ be the exponent defined by \eqref{eq:1.16}. 
  Then $r^*$ satisfies $0<r^*<p$ and 
  \[
    \frac{2(p-r^*)}{p-1}=\Lambda,
  \]
  yielding 
  \begin{equation}
    \notag 
    \int_{\rho_0}^\infty e^{\Lambda \tau}|I(\tau)| \, d\tau
    \left\{
      \begin{aligned}
        & <\infty &&\text{if} \ \ 0<r<r_*,
        \\
        & =\infty &&\text{if} \ \ r_*\le r<p.
      \end{aligned}
    \right.
  \end{equation}
  Here $\Lambda$ is defined by \eqref{eq:3.2}.
  Therefore, keeping Remark~$\mathrm{\ref{Remark:3.1}}$ in mind and applying Theorem~$\mathrm{\ref{Theorem:3.1}}$, 
  we obtain a constant multiple of the functions in Table~$\mathrm{\ref{table:2}}$ 
  as an upper bound for $|\eta(\rho)|+|\eta'(\rho)|$ as $\rho\to\infty$. 
  \begin{table}[h]
    \begin{center}
      {\renewcommand\arraystretch{1.8}
      \begin{tabular}{|c||c|c|c|}
        \hline 
        $|\eta(\rho)|+|\eta'(\rho)|$
        & $p_\mathrm{c}<p<p_*$ 
        & $p=p_*$ 
        & $p_*<p<p_\mathrm{S}$  
        \\[1mm]
        \hline\hline 
        $\displaystyle 0< r<r^*$ 
        & $\displaystyle e^{-\lambda_1 \rho}$
        & $\displaystyle \rho e^{-\lambda_*\rho}$
        &
        $\displaystyle e^{-\frac{a}{2}\rho}$
        \\[1mm]
        \hline
        $\displaystyle r=r^*$ 
        & $\displaystyle \rho e^{-\lambda_1 \rho}$
        & $\displaystyle \rho^2 e^{-\lambda_*\rho}$
        &
        $\displaystyle \rho e^{-\frac{a}{2}\rho}$
        \\[1mm]
        \hline
        \rule[0cm]{-1.5mm}{8mm}
        \raisebox{3.5pt}{$\displaystyle r^*<r<p$}
        & 
        \multicolumn{3}{c|}{
          \raisebox{3.5pt}{$\displaystyle  e^{-\frac{2(p-r)}{{p-1}}\rho}$}
        }
        \\
        \hline 
      \end{tabular}
      \caption{Upper estimates of decay rates of $|\eta(\rho)|+|\eta'(\rho)|$}\label{table:2}
      }
    \end{center}
  \end{table}
\end{example}

\begin{proof}
  Equality \eqref{eq:4.1} is obvious. 
  Since equalities \eqref{eq:4.3} and \eqref{eq:4.4} are easy to follow from \eqref{eq:4.1} and \eqref{eq:4.2}, 
  it remains to show equality \eqref{eq:4.2}.
  By Taylor's expansion of $1/f(s)$,
  we have
  \[
    \begin{aligned}
      F(s)
      =
      \int_s^{\infty}\frac{1}{u^p\left(1+\frac{1}{u^{p-r}}\right)}\, du
      =
      \sum_{k=0}^{\infty}\int_s^{\infty}
      (-1)^k \frac{1}{u^{p+k(p-r)}}\, du,
    \end{aligned}
  \]
  yielding \eqref{eq:4.2}.
  Estimate~\eqref{eq:4.5} can be seen from \eqref{eq:2.4}, \eqref{eq:2.7}, \eqref{eq:2.8}, \eqref{eq:4.2}, \eqref{eq:4.3} and \eqref{eq:4.4}. 
  Table~2
  can be obtained by Theorem~$\mathrm{\ref{Theorem:3.1}}$ and Remark~$\mathrm{\ref{Remark:3.1}}$ with simple calculations. 
\end{proof}

\begin{example}
  \label{Example:4.2}
  Let $f(s)=s^p(\log s)^r$ for $s>2$ with $p>1$ and $r\in \mathbb{R}$. 
  Then $p_f=p$ and 
  \begin{align}
    \label{eq:4.6}
    f'(s)
    &
    =ps^{p-1}(\log s)^r+rs^{p-1}(\log s)^{r-1}, 
    \\
    \label{eq:4.7}
    F(s)
    &
    =
    \frac{1}{p-1}\frac{1}{s^{p-1}(\log s)^r}-\frac{r}{(p-1)^2}\frac{1}{s^{p-1}(\log s)^{r+1}}
    +O\left(\frac{1}{s^{p-1}(\log s)^{r+2}}\right),
    \\
    \label{eq:4.8}
    f'(s)F(s)
    &
    =
    \frac{p}{p-1}-\frac{r}{(p-1)^2}\frac{1}{\log s}
    +O\left(\frac{1}{(\log s)^{2}}\right),
    \\
    \label{eq:4.9}
    \frac{f(s)F(s)}{s}
    &
    =
    \frac{1}{p-1}-\frac{r}{(p-1)^2}\frac{1}{\log s}
    +O\left(\frac{1}{(\log s)^{2}}\right),
  \end{align}
  as $s\to \infty$. 
  Furthermore, 
  \begin{equation}
    \label{eq:4.10}
    I(\rho)\simeq \frac{1}{\log \phi(\rho)}\simeq \frac{1}{\rho}
  \end{equation}
  for all sufficiently large $\rho>0$ and
  \begin{equation}\label{eq:4.11}
    |\eta(\rho)| + |\eta'(\rho)| 
    =O\left(
      \int_{\rho_0}^{\rho}Q(\rho,\tau)|I(\tau)|\, d\tau
    \right)
    =O\left(\frac{1}{\rho}\right) 
  \end{equation}
  as $\rho\to\infty$. 
\end{example}

\begin{proof}
  \eqref{eq:4.6} is a direct consequence, and \eqref{eq:4.8} and \eqref{eq:4.9} follows from \eqref{eq:4.6} and \eqref{eq:4.7}, 
  so we only show \eqref{eq:4.7}. 
  By integration by parts, we have
  \begin{equation}
    \label{eq:4.12}
    \begin{aligned}
      F(s)
      &
      =\int_s^{\infty}\frac{du}{u^p(\log u)^r}
      =\frac{1}{p-1}\frac{1}{s^{p-1}(\log s)^r}-\frac{r}{p-1}\int_s^{\infty}\frac{du}{u^p(\log u)^{r+1}}
      \\
      &
      =\frac{1}{p-1}\frac{1}{s^{p-1}(\log s)^r}-\frac{r}{(p-1)^2}\frac{1}{s^{p-1}(\log s)^{r+1}}
      +\frac{r(r+1)}{(p-1)^2}\int_s^{\infty}\frac{du}{u^p(\log u)^{r+2}}
    \end{aligned}
  \end{equation}
  for all $s>2$. 
  Since 
  \begin{align*}
    0
    \le 
    \int_s^{\infty}\frac{du}{u^p(\log u)^{r+2}} 
    &
    =
    \frac{1}{p-1}
    \frac{1}{s^{p-1}(\log s)^{r+2}}-\frac{r+2}{p-1}\int_s^{\infty}\frac{du}{u^p(\log u)^{r+3}}
    \lesssim 
    \frac{1}{s^{p-1}(\log s)^{r+2}} 
  \end{align*}
  for all sufficiently large $s>0$, 
  by \eqref{eq:4.12} we obtain \eqref{eq:4.7}. 
  Estimate~\eqref{eq:4.10} can be seen from \eqref{eq:2.4}, \eqref{eq:2.7}, \eqref{eq:2.8}, \eqref{eq:4.7}, \eqref{eq:4.8} and \eqref{eq:4.9}. 
  We can also obtain \eqref{eq:4.11} by simple computations with the help of Theorem~\ref{Theorem:3.1}. 
\end{proof}

\begin{example}
  \label{Example:4.3}
  Let $f(s)=s^p\exp((\log s)^r)$ for $s>1$ with $p>1$ and $0<r<1$. 
  Then $p_f=p$ and 
  \begin{align}
    \notag 
    f'(s)
    &
    =
    ps^{p-1}\exp((\log s)^r)
    +rs^{p-1}(\log s)^{r-1}\exp((\log s)^r),
    \\
    \label{eq:4.13}
    \begin{split}
      F(s)
      &
      =
      \frac{1}{p-1}\frac{1}{s^{p-1}\exp((\log s)^r)}
      -
      \frac{r}{(p-1)^2}
      \frac{
        (\log s)^{r-1}
      }{
        s^{p-1}
        \exp((\log s)^r)
      }
      \\
      & \hspace{14em}
      + O\left(
      \frac{
      (\log s)^{2r-2}
      }
      {
      s^{p-1}\exp((\log s)^r)
      }
      \right),
    \end{split}
    \\
    \notag 
    f'(s)F(s)
    &
    =
    \frac{p}{p-1}
    -\frac{r}{(p-1)^2}(\log s)^{r-1}
    +
    O\left((\log s)^{2r-2}\right),
    \\
    \notag 
    \frac{f(s)F(s)}{s}
    &
    =
    \frac{1}{p-1}
    -\frac{r}{(p-1)^2}(\log s)^{r-1}
    +
    O\left((\log s)^{2r-2}\right),
  \end{align}
  as $s\to \infty$.
  Furthermore, 
  \begin{equation}
    \label{eq:4.14}
    I(\rho)\simeq \frac{1}{(\log \phi(\rho))^{1-r}}\simeq \frac{1}{\rho^{1-r}}
  \end{equation}
  for all sufficiently large $\rho>0$ 
  and
  \begin{equation}\label{eq:4.15}
    |\eta(\rho)| + |\eta'(\rho)| 
    = O\left(\int_{\rho_0}^{\rho}Q(\rho,\tau)|I(\tau)|\, d\tau\right)=O\left(\frac{1}{\rho^{1-r}}\right) 
  \end{equation}
  as $\rho\to\infty$. 
\end{example}

\begin{proof}
  Similarly to examples~\ref{Example:4.1} and \ref{Example:4.2}, 
  we only derive \eqref{eq:4.13}. 
  It follows from integration by parts that
  \begin{align*}
    F(s)
    &
    =
    \int_{s}^{\infty}\frac{du}{u^p\exp((\log u)^r)}
    =\frac{1}{p-1}\frac{1}{s^{p-1}\exp((\log s)^r)}
    -\frac{r}{p-1}\int_s^{\infty}\frac{(\log u)^{r-1}}{u^p\exp((\log u)^r)}\, du
    \\
    &
    =
    \frac{1}{p-1}
    \frac{1}{s^{p-1}\exp((\log s)^r)}
    -\frac{r}{(p-1)^2}
    \frac{(\log s)^{r-1}}{s^{p-1}\exp((\log s)^r)}
    \\
    &
    \qquad
    +
    \frac{r^2}{(p-1)^2}
    \int_s^{\infty}\frac{(\log u)^{2r-2}}{u^p\exp((\log u)^r)}\, du
    -
    \frac{r(r-1)}{(p-1)^2}
    \int_s^{\infty}\frac{(\log u)^{r-2}}{u^p\exp((\log u)^r)}\, du.
  \end{align*}
  Since $r<1$, the functions $(\log u)^{2r-2}$ and $(\log u)^{r-2}$ are non-increasing, 
  so the last two terms can be estimated as follows: 
  \begin{align*}
    \left|
      \frac{r^2}{(p-1)^2}
      \int_s^{\infty}\frac{(\log u)^{2r-2}}{u^p\exp((\log u)^r)}du
      -
      \frac{r(r-1)}{(p-1)^2}
      \int_s^{\infty}\frac{(\log u)^{r-2}}{u^p\exp((\log u)^r)}du
    \right|
    \\
    \qquad 
    \lesssim 
    \frac{(\log s)^{2r-2}}{\exp((\log s)^r)}
    \int_s^{\infty}\frac{1}{u^p}du
    =
    O
    \left(
    \frac{(\log s)^{2r-2}}{s^{p-1}\exp((\log s)^r)}
    \right)
  \end{align*}
  for all sufficiently large $s>0$. 
  This proves \eqref{eq:4.13}. 
  Estimates~\eqref{eq:4.14} and \eqref{eq:4.15} are easily obtained, 
  so the proof is omitted. 
\end{proof}

\begin{example}
  \label{Example:4.4}
  Let $f(s)=s^p+s^r(\log s)^{\beta}$ for $s>2$ with $p>1$, $0<r<p$ and $\beta \in \mathbb{R}$. 
  Then $p_f=p$ and 
  \begin{align}
    \notag 
    f'(s)
    &
    =
    ps^{p-1}
    +rs^{r-1}(\log s)^{\beta}+\alpha s^{r-1}(\log s)^{\beta-1}, 
    \\
    \notag 
    F(s)
    &
    =
    \frac{1}{p-1}\frac{1}{s^{p-1}}
    -
    \frac{1}{(2p-r-1)^2}
    \frac{
    (\log s)^{\beta}
    }
    {
    s^{2p-r-1}
    }
    +O\left(\frac{(\log s)^{\beta-1}}{s^{2p-r-1}}\right),
    \\
    \notag 
    f'(s)F(s)
    &
    =
    \frac{p}{p-1}
    -
    \frac{(p-r)(p-r-1)}{(p-1)(2p-r-1)}
    \frac{(\log s)^{\beta}}{s^{p-r}}
    +
    O\left(\frac{(\log s)^{\beta-1}}{s^{p-r}}
    \right),
    \\
    \notag 
    \frac{f(s)F(s)}{s}
    &
    =
    \frac{1}{p-1}
    +
    \frac{p-r}{(p-1)(2p-r-1)}
    \frac{(\log s)^{\beta}}{s^{p-r}}
    +
    O\left(\frac{(\log s)^{\beta-1}}{s^{p-r}}
    \right),
  \end{align}
  as $s\to \infty$.
  Furthermore, 
  \[
    I(\rho)\simeq \frac{(\log \phi(\rho))^{\beta}}{\phi(\rho)^{p-r}}\simeq \rho^{\beta}e^{-\frac{2(p-r)}{p-1}\rho}
  \]
  for all sufficiently large $\rho>0$ 
  and
  upper decay estimates of $|\eta(\rho)|+|\eta'(\rho)|$ as $\rho\to\infty$
  are given by 
  a constant multiple of the functions in Table~$\mathrm{\ref{table:3}}$.  
  Here $r^*$ is the exponent defined by \eqref{eq:1.16}. 
  \newline
  \begin{table}[H]
    \begin{center}
      {\renewcommand\arraystretch{2}
      \begin{tabular}{|c||c|c|c|}
        \hline 
        $|\eta(\rho)|+|\eta'(\rho)|$
        & $p_\mathrm{c}<p<p_*$ 
        & $p=p_*$ 
        & $p_*<p<p_\mathrm{S}$  
        \\[1mm]
        \hline\hline 
        $\displaystyle 0< r<r^*$ 
        & $\displaystyle e^{-\lambda_1 \rho}$
        & $\displaystyle \rho e^{-\lambda_*\rho}$
        &
        $\displaystyle e^{-\frac{a}{2}\rho}$
        \\[1mm]
        \hline
        \multirow{2}{*}{$\displaystyle r=r^*$} 
        & \multirow{2}{*}{$\displaystyle \rho^{\beta+1} e^{-\lambda_1 \rho}$}
        & $\rho(\log \rho)e^{-\lambda_* \rho}$ if $\beta= -1$
        &
        \multirow{2}{*}{$\displaystyle \rho^{\beta+1} e^{-\frac{a}{2}\rho}$}
        \\ \cdashline{3-3}
        & & $\rho^{\beta+2}e^{-\lambda_* \rho}$ if $\beta\neq -1$ & 
        \\[1mm]
        \hline
        \rule[0cm]{-1.5mm}{8mm}
        \raisebox{3.5pt}{$\displaystyle r^*<r<p$}
        & 
        \multicolumn{3}{c|}{
          \raisebox{3.5pt}{$\displaystyle  \rho^{\beta} e^{-\frac{2(p-r)}{{p-1}}\rho}$}
        }
        \\
        \hline 
      \end{tabular}
      \caption{Upper estimates of decay rates of $|\eta(\rho)|+|\eta'(\rho)|$}\label{table:3}
      }
    \end{center}
  \end{table}
\end{example}

\begin{proof}
  Example~\ref{Example:4.4} can be obtained in the same way as Example~\ref{Example:4.1}.
  We omit the proof. 
\end{proof}

\section*{Appendix}
As mentioned in Remark~\ref{Remark:1.1}, 
in order to find the specific divergence rates of the singular solutions, 
it is necessary to examine the expansion of $F^{-1}(s)$ as $s\to 0$, 
but this needs to be checked separately depending on the form of $f$
and has therefore not been included in the main body of the paper. 
In this section, we focus on the case $f(s)=s^p+s^r$ with $1<r<p$ which appeared in Example~\ref{Example:4.1}, 
and give the behavior of $F^{-1}(\sigma)$ as $\sigma \to 0$ and hence of $u(x)$ as $x\to 0$. 

Set $\sigma =F(s)$.
Note that $\sigma\to 0$ as $s\to \infty$.  
By \eqref{eq:4.2} we have 
\[
  \sigma
  =
  F(s)
  =
  \frac{1}{p-1}\frac{1}{s^{p-1}}
  \left(1-\frac{p-1}{2p-r-1}\frac{1}{s^{p-r}}+O\left(s^{-2(p-r)}\right)\right), 
\]
and as a result, we obtain    
\[
  s^{p-1}
  =
  ((p-1)\sigma)^{-1}
  \left(1-\frac{p-1}{2p-r-1}\frac{1}{s^{p-r}}+O\left(s^{-2(p-r)}\right)\right) 
\]
for all sufficiently large $s>0$. 
Hence we have
\begin{align*}
  s^{p-r}
  &
  =
  ((p-1)\sigma)^{-\frac{p-r}{p-1}}
  \left(1-\frac{p-1}{2p-r-1}\frac{1}{s^{p-r}}+O\left(s^{-2(p-r)}\right)\right)^{\frac{p-r}{p-1}}
  \\
  &
  =
  ((p-1)\sigma)^{-\frac{p-r}{p-1}}
  \left(1-\frac{p-r}{2p-r-1}\frac{1}{s^{p-r}}+O\left(s^{-2(p-r)}\right)\right) 
\end{align*}
for all sufficiently large $s>0$. 
Substituting the expression on the right hand side into the denominator of $1/s^{p-r}$ on the right hand side, we have
\begin{align*}
  s^{p-r}
  &
  =
  ((p-1)\sigma)^{-\frac{p-r}{p-1}}
  \left(1-\frac{p-r}{2p-r-1}((p-1)\sigma)^{\frac{p-r}{p-1}}\frac{1}{1+O\left(\sigma^{\frac{p-r}{p-1}}\right)}
  +O\left(\sigma^{\frac{2(p-r)}{p-1}}\right)\right)
  \\
  &
  =
  ((p-1)\sigma)^{-\frac{p-r}{p-1}}-\frac{p-r}{2p-r-1}
  +O\left(\sigma^{\frac{p-r}{p-1}}\right)
\end{align*}
for all sufficiently small $\sigma>0$. 
Therefore,
\begin{align*}
  F^{-1}(\sigma)
  =
  s
  &
  =
  \left(
    ((p-1)\sigma)^{-\frac{p-r}{p-1}}-\frac{p-r}{2p-r-1}
    +O\left(\sigma^{\frac{p-r}{p-1}}\right)
  \right)^{\frac{1}{p-r}}
  \\
  &
  =
  ((p-1)\sigma)^{-\frac{1}{p-1}}-\frac{1}{2p-r-1}((p-1)\sigma)^{\frac{p-r-1}{p-1}}
  +O\left(\sigma^{\frac{2(p-r)-1}{p-1}}\right)
\end{align*}
for all sufficiently small $\sigma>0$. 
Taking $\sigma = |x|^2/(2N-4q)$ with $q={p}/{(p-1)}$, we obtain
\[
  F^{-1}\left(\frac{|x|^2}{2N-4q}\right)
  =
  L_p|x|^{-\frac{2}{p-1}}
  -\frac{L_p^{-(p-r-1)}}{2p-r-1}
  |x|^{-\frac{2}{p-1}+\frac{2(p-r)}{p-1}}+O\left(|x|^{-\frac{2}{p-1}+\frac{4(p-r)}{p-1}}\right),
\]
where $L_p$ is the constant given in \eqref{eq:1.4}.  
Thus, by Corollary~\ref{Corollary:1.1} we see that there exists a singular solution $u$ of \eqref{eq:1.1} satisfying 
\begin{align*}
  u(x)
  & 
  =
  \left(
    L_p |x|^{-\frac{2}{p-1}}
    -\frac{L_p^{-(p-r-1)}}{2p-r-1}
    |x|^{-\frac{2}{p-1}+\frac{2(p-r)}{p-1}}+O\left(|x|^{-\frac{2}{p-1}+\frac{4(p-r)}{p-1}}\right)
  \right) (1+\theta(|x|))
  \\
  &
  =
  L_p|x|^{-\frac{2}{p-1}}
  \left(
    1+\theta(|x|)
    -\frac{L_p^{-(p-r-1)}}{2p-r-1}
    |x|^{\frac{2(p-r)}{p-1}}+O\left(|x|^{\frac{2(p-r)}{p-1}}\theta(|x|)\right)
    +O\left(|x|^{\frac{4(p-r)}{p-1}}\right)
  \right)
\end{align*}
as $x\to 0$, 
where $\theta$ is estimated from above by a constant multiple of the functions in Table~\ref{table:1}. 

In other examples, the behavior of $F^{-1}(\sigma)$ as $\sigma \to 0$ can 
be obtained from the expansion of $F(s)$ as $s\to \infty$ given in Examples~\ref{Example:4.2}--\ref{Example:4.4}, 
but this will be left to the reader. 

In order to obtain a higher order expansion of singular solution, 
we need to study the behavior of $\theta(|x|)$ as $x\to 0$, 
that is, a solution of \eqref{eq:2.11} $\eta(\rho)$ 
as $\rho\to\infty$ in more detail, 
which will be investigated in a forthcoming paper. 

\bigskip


\noindent
\textbf{Acknowledgements}\quad 
The authors were partially supported by JSPS KAKENHI Grant Number 23K03179 and 20KK0057,
respectively.



\bigskip

\noindent 
\textbf{Data availability}\quad 
Data sharing is not applicable to this article as no new data were created or analyzed in this study.

\bigskip

\noindent 
\textbf{Conflict of interest}\quad 
The authors declare no conflict of interest.

\begin{bibdiv}
\begin{biblist}

\bib{A}{article}{
	author={Aviles, Patricio},
	title={On isolated singularities in some nonlinear partial differential
		equations},
	journal={Indiana Univ. Math. J.},
	volume={32},
	date={1983},
	number={5},
	pages={773--791},
	issn={0022-2518},
	doi={10.1512/iumj.1983.32.32051},
}

\bib{BL}{article}{
   author={Br\'{e}zis, Ha\"{\i}m},
   author={Lions, Pierre-Louis},
   title={A note on isolated singularities for linear elliptic equations},
   conference={
      title={Mathematical analysis and applications, Part A},
   },
   book={
      series={Adv. Math. Suppl. Stud.},
      volume={7a},
      publisher={Academic Press, New York-London},
   },
   isbn={0-12-512801-0},
   date={1981},
   pages={263--266},
}

\bib{CGS}{article}{
   author={Caffarelli, Luis A.},
   author={Gidas, Basilis},
   author={Spruck, Joel},
   title={Asymptotic symmetry and local behavior of semilinear elliptic
   equations with critical Sobolev growth},
   journal={Comm. Pure Appl. Math.},
   volume={42},
   date={1989},
   number={3},
   pages={271--297},
   issn={0010-3640},
   doi={10.1002/cpa.3160420304},
}

\bib{CL}{article}{
   author={Chen, Chiun-Chuan},
   author={Lin, Chang-Shou},
   title={Existence of positive weak solutions with a prescribed singular
   set of semilinear elliptic equations},
   journal={J. Geom. Anal.},
   volume={9},
   date={1999},
   number={2},
   pages={221--246},
   issn={1050-6926},
   doi={10.1007/BF02921937},
}

\bib{DF}{article}{
   author={Dupaigne, L.},
   author={Farina, A.},
   title={Stable solutions of $-\Delta u=f(u)$ in $\Bbb R^N$},
   journal={J. Eur. Math. Soc. (JEMS)},
   volume={12},
   date={2010},
   number={4},
   pages={855--882},
   issn={1435-9855},
   doi={10.4171/JEMS/217},
}

\bib{F}{article}{
   author={Fujishima, Yohei},
   title={Blow-up set for a superlinear heat equation and pointedness of the
   initial data},
   journal={Discrete Contin. Dyn. Syst.},
   volume={34},
   date={2014},
   number={11},
   pages={4617--4645},
   issn={1078-0947},
   doi={10.3934/dcds.2014.34.4617},
}

\bib{FHIL}{article}{
  author={Fujishima, Yohei},
  author={Hisa, Kotaro},
  author={Ishige, Kazuhiro},
  author={Laister, Robert},
  issn={0021-7824},
  doi={10.1016/j.matpur.2024.04.005},
  language={English, with English and French summaries},
  title={Local solvability and dilation-critical singularities of supercritical fractional heat equations},
  journal={Journal de Math{\'e}matiques Pures et Appliqu{\'e}es. Neuvi{\`e}me S{\'e}rie},
  volume={186},
  pages={150--175},
  date={2024},
}

\bib{FI}{article}{
   author={Fujishima, Yohei},
   author={Ioku, Norisuke},
   title={Existence and nonexistence of solutions for the heat equation with
   a superlinear source term},
   language={English, with English and French summaries},
   journal={J. Math. Pures Appl. (9)},
   volume={118},
   date={2018},
   pages={128--158},
   issn={0021-7824},
   doi={10.1016/j.matpur.2018.08.001},
}

\bib{FI2}{article}{
   author={Fujishima, Yohei},
   author={Ioku, Norisuke},
   title={Solvability of a semilinear heat equation via a quasi scale
   invariance},
   conference={
      title={Geometric properties for parabolic and elliptic PDEs},
   },
   book={
      series={Springer INdAM Ser.},
      volume={47},
      publisher={Springer, Cham},
   },
   isbn={978-3-030-73362-9},
   isbn={978-3-030-73363-6},
   date={[2021] \copyright 2021},
   pages={79--101},
   doi={10.1007/978-3-030-73363-6\_5},
}

\bib{FI3}{article}{
   author={Fujishima, Yohei},
   author={Ioku, Norisuke},
   title={Global in time solvability for a semilinear heat equation without
   the self-similar structure},
   journal={Partial Differ. Equ. Appl.},
   volume={3},
   date={2022},
   number={2},
   pages={Paper No. 23, 32},
   issn={2662-2963},
   doi={10.1007/s42985-022-00158-3},
}

\bib{FI4}{article}{
   author={Fujishima, Yohei},
   author={Ioku, Norisuke},
   title={Quasi self-similarity and its application to the global in time
   solvability of a superlinear heat equation},
   journal={Nonlinear Anal.},
   volume={236},
   date={2023},
   pages={Paper No. 113321, 18},
   issn={0362-546X},
   doi={10.1016/j.na.2023.113321},
}

\bib{FIRT}{article}{
   author={Fujishima, Yohei},
   author={Ioku, Norisuke},
   author={Ruf, Bernhard},
   author={Terraneo, Elide},
   title={Singular solutions of semilinear elliptic equations with
   exponential nonlinearities in 2-dimensions},
   journal={J. Funct. Anal.},
   volume={289},
   date={2025},
   number={1},
   pages={Paper No. 110922},
   issn={0022-1236},
   doi={10.1016/j.jfa.2025.110922},
}

\bib{FR}{article}{
   author={de Figueiredo, D. G.},
   author={Ruf, B.},
   title={Existence and non-existence of radial solutions for elliptic
   equations with critical exponent in ${\bf R}^2$},
   journal={Comm. Pure Appl. Math.},
   volume={48},
   date={1995},
   number={6},
   pages={639--655},
   issn={0010-3640},
   doi={10.1002/cpa.3160480605},
}

\bib{GG}{article}{
   author={Ghergu, Marius},
   author={Goubet, Olivier},
   title={Singular solutions of elliptic equations with iterated
   exponentials},
   journal={J. Geom. Anal.},
   volume={30},
   date={2020},
   number={2},
   pages={1755--1773},
   issn={1050-6926},
   doi={10.1007/s12220-019-00277-1},
}

\bib{GNW}{article}{
   author={Gui, Changfeng},
   author={Ni, Wei-Ming},
   author={Wang, Xuefeng},
   title={On the stability and instability of positive steady states of a
   semilinear heat equation in ${\bf R}^n$},
   journal={Comm. Pure Appl. Math.},
   volume={45},
   date={1992},
   number={9},
   pages={1153--1181},
   issn={0010-3640},
   doi={10.1002/cpa.3160450906},
}

\bib{IKNW}{article}{
   author={Ibrahim, Slim},
   author={Kikuchi, Hiroaki},
   author={Nakanishi, Kenji},
   author={Wei, Juncheng},
   title={Non-uniqueness for an energy-critical heat equation on
   $\Bbb{R}^2$},
   journal={Math. Ann.},
   volume={380},
   date={2021},
   number={1-2},
   pages={317--348},
   issn={0025-5831},
   doi={10.1007/s00208-020-01961-2},
}

\bib{J}{article}{
   author={Johnson, Russell A.},
   author={Pan, Xing Bin},
   author={Yi, Yingfei},
   title={Singular solutions of the elliptic equation $\Delta u-u+u^p=0$},
   journal={Ann. Mat. Pura Appl. (4)},
   volume={166},
   date={1994},
   pages={203--225},
   issn={0003-4622},
   doi={10.1007/BF01765635},
}

\bib{Kumagai}{article}{
   author={Kumagai, Kenta},
   title={Bifurcation diagrams of semilinear elliptic equations for
   supercritical nonlinearities in two dimensions},
   journal={NoDEA Nonlinear Differential Equations Appl.},
   volume={32},
   date={2025},
   number={3},
   pages={Paper No. 38},
   issn={1021-9722},
   doi={10.1007/s00030-025-01043-9},
}

\bib{KW}{article}{
   author={Kikuchi, Hiroaki},
   author={Wei, Juncheng},
   title={A bifurcation diagram of solutions to an elliptic equation with
   exponential nonlinearity in higher dimensions},
   journal={Proc. Roy. Soc. Edinburgh Sect. A},
   volume={148},
   date={2018},
   number={1},
   pages={101--122},
   issn={0308-2105},
   doi={10.1017/S0308210517000154},
}

\bib{L}{article}{
   author={Lions, P.-L.},
   title={Isolated singularities in semilinear problems},
   journal={J. Differential Equations},
   volume={38},
   date={1980},
   number={3},
   pages={441--450},
   issn={0022-0396},
   doi={10.1016/0022-0396(80)90018-2},
}

\bib{MP}{article}{
   author={Mignot, Fulbert},
   author={Puel, Jean-Pierre},
   title={Solution radiale singuli\`ere de $-\Delta u=\lambda e^u$},
   language={French, with English summary},
   journal={C. R. Acad. Sci. Paris S\'{e}r. I Math.},
   volume={307},
   date={1988},
   number={8},
   pages={379--382},
   issn={0249-6291},
}

\bib{M2}{article}{
   author={Miyamoto, Yasuhito},
   title={Structure of the positive solutions for supercritical elliptic
   equations in a ball},
   journal={J. Math. Pures Appl. (9)},
   volume={102},
   date={2014},
   number={4},
   pages={672--701},
   issn={0021-7824},
   doi={10.1016/j.matpur.2014.02.002},
}

\bib{M3}{article}{
   author={Miyamoto, Yasuhito},
   title={Classification of bifurcation diagrams for elliptic equations with
   exponential growth in a ball},
   journal={Ann. Mat. Pura Appl. (4)},
   volume={194},
   date={2015},
   number={4},
   pages={931--952},
   issn={0373-3114},
   doi={10.1007/s10231-014-0404-8},
}

\bib{M}{article}{
   author={Miyamoto, Yasuhito},
   title={A limit equation and bifurcation diagrams of semilinear elliptic
   equations with general supercritical growth},
   journal={J. Differential Equations},
   volume={264},
   date={2018},
   number={4},
   pages={2684--2707},
   issn={0022-0396},
   doi={10.1016/j.jde.2017.10.034},
}

\bib{MN2}{article}{
   author={Miyamoto, Yasuhito},
   author={Naito, Y\={u}ki},
   title={Singular extremal solutions for supercritical elliptic equations
   in a ball},
   journal={J. Differential Equations},
   volume={265},
   date={2018},
   number={7},
   pages={2842--2885},
   issn={0022-0396},
   doi={10.1016/j.jde.2018.04.055},
}

\bib{MN3}{article}{
   author={Miyamoto, Yasuhito},
   author={Naito, Y\={u}ki},
   title={Fundamental properties and asymptotic shapes of the singular and
   classical radial solutions for supercritical semilinear elliptic
   equations},
   journal={NoDEA Nonlinear Differential Equations Appl.},
   volume={27},
   date={2020},
   number={6},
   pages={Paper No. 52, 25},
   issn={1021-9722},
   doi={10.1007/s00030-020-00658-4},
}

\bib{MN}{article}{
   author={Miyamoto, Yasuhito},
   author={Naito, Y\={u}ki},
   title={Singular solutions for semilinear elliptic equations with general
   supercritical growth},
   journal={Ann. Mat. Pura Appl. (4)},
   volume={202},
   date={2023},
   number={1},
   pages={341--366},
   issn={0373-3114},
   doi={10.1007/s10231-022-01244-4},
}

\bib{Naimen}{article}{
   author={Naimen, Daisuke},
   title={Concentration and oscillation analysis of
positive solutions to semilinear elliptic
equations with exponential growth in a disc},
   journal={preprint},
}

\bib{NS}{article}{
   author={Ni, Wei-Ming},
   author={Sacks, Paul},
   title={Singular behavior in nonlinear parabolic equations},
   journal={Trans. Amer. Math. Soc.},
   volume={287},
   date={1985},
   number={2},
   pages={657--671},
   issn={0002-9947},
   doi={10.2307/1999667},
}

\bib{SZ}{article}{
   author={Serrin, James},
   author={Zou, Henghui},
   title={Classification of positive solutions of quasilinear elliptic
   equations},
   journal={Topol. Methods Nonlinear Anal.},
   volume={3},
   date={1994},
   number={1},
   pages={1--25},
   issn={1230-3429},
   doi={10.12775/TMNA.1994.001},
}

\end{biblist}
\end{bibdiv}


\end{document}